%
%
\documentclass[A4]{article}
\usepackage[utf8]{inputenc}
\usepackage{graphicx}
\usepackage{amsmath}
\usepackage{amssymb, amsfonts, amscd, amsthm}
\usepackage{enumerate}
\usepackage[T2A]{fontenc}
\usepackage[english]{babel}
\pagestyle{plain}

\newtheorem{theorem}{Theorem}
\newtheorem{proposition}{Proposition}
\newtheorem{lemma}{Lemma}
\newtheorem{corollary}{Corollary}

\usepackage{url}

\theoremstyle{definition}
\newtheorem{definition}{definition}

\theoremstyle{remark}

\begin{document}

\author{A.A. Novikov$^*$, Z.Eskandarian$^\circ$, Z.Kholmatova$^\dagger$}

\title{Mixed limits 
of some functional spaces}

\maketitle

\noindent $*$ e-mail: a.hobukob@gmail.com, Kazan Federal University, Kremlievskaia ul. 18, Kazan, Tatarstan, 420008, Russia

\noindent $\circ$ e-mail: zohreh.eskandarian@gmail.com,Kazan Federal University, Kremlievskaia ul. 18, Kazan, Tatarstan, 420008, Russia

\noindent $\dagger$ e-mail: zamira.kholmatova@gmail.com, Innopolis University, Universitetskaia ul., 1, Innopolis, Tatarstan, 420500, Russia

\begin{abstract} In this article we propose a conception of mixed limits of functional spaces as the case, when the upper limit (projective limit of inductive limits) and the lower limit (inductive limit of projective limits) coincide as topological spaces, which are generalization of inductive and projective limits of functional spaces. We show a cases where these mixed limits are naturally obtained as the limit spaces of non-commutative $L_p$-type spaces associated with the sequence of operators. Also, we obtain results on the properties of limit spaces, we show that limit spaces of Banach algebras are (LF)-spaces, if they converge. 
\end{abstract}

subject classification: { 46A13, 46B70, 46L10, 46L51, 46L52} 

keywords: {inductive limit, projective limit, power parameter, (LB)-space, (LF)-space, Frechet space, locally convex space, order unit base norm, inductive limit, initial topology, final topology, order unit space, measurable functions, Banach space} 

\section*{Introduction}

In the article \cite{Nov2017} we have defined the space $L_\infty(a)$ associated with positive operator affiliated with the von Neumann algebra. Further in $\cite{Esk2018, NovEsk2017}$ we have considered the commutative constructions of the limits spaces $L_\infty(f^\alpha)$, $L_1(f^\alpha)$ and $L_\infty^*(f^\alpha)$, and found that they are total for each other in the dualities
$\langle \lim L_1(f^\alpha), \lim L_\infty(f^\alpha)\rangle$ and $\langle \lim L_\infty(f^\alpha), \lim L_\infty^*(f^\alpha)\rangle.$ In this work we start to apply the same methodology for the noncommutative $L_\infty(a)$ spaces.

In the result we get the (LF)-spaces (the inductive limits of Frechet spaces), which are studied for example in \cite{Esterle1997, Kunzinger1993, Lafuente1987, Kothe}.

\section{Definitions and Notation}

Let $\mathcal{B}_1, \mathcal{B}_2$ be Banach spaces with the norms $\|\cdot\|_1$ and $\|\cdot\|_2$ such that $\mathcal{B}_1\subset \mathcal{B}_2$.
We will write $\|\cdot\|_{1} \succ \|\cdot\|_{2}$ if $$\exists C \in \mathbb{R}^+ \ \forall x\in \mathcal{B}_1 \ \|x\|_2\leq C\|x\|_1.$$

Throughout this paper we adhere to the following notation. By $\mathcal{M}$ we denote a von Neumann algebra that acts on a Hilbert space $H$ with the scalar product $\langle\cdot,\cdot \rangle$. We denote its selfadjoint part by $\mathcal{M}^\mathrm{sa}$, and the set of all projections in $\mathcal{M}$ by $\mathcal{M}^\mathrm{pr}$.
Let $p\in \mathcal{M}^\mathrm{pr}$ and $x\in \mathcal{M}$, then by $x_p$ we denote the restriction of
$pxp$ to $pH$ (i.e. $x_p:=pxp|_{pH}$), also we denote the reduction of $\mathcal{M}$ to $pH$ by 
$\mathcal{M}_p$. By $\mathfrak{C}(\mathcal{M})$ we denote the center of $\mathcal{M}$. By $\mathcal{M}_*$ 
and $\mathcal{M}_*^h$ we denote the predual of $\mathcal{M}$ and its Hermitian part, respectively. If an 
operator $x$ is affiliated with $\mathcal{M}$ then we write $x\eta \mathcal{M}$. We denote the domain of 
an operator $x$ by $D(x)$. The adjoint operator is denoted by $x^*$. We denote the identity operator, the 
zero operator and the zero vector
by $\mathbf{1}$,$\mathbf{0}$ and $\mathit{0}$, respectively. We use standard notation for multiplication   of a functional
$\varphi\in\mathcal{M}^*$ by an operator $x\in \mathcal{M}$, namely, $x\varphi$, $\varphi x$ and $x\varphi x$ denote the linear functionals $y\mapsto \varphi(xy)$, $y\mapsto \varphi(yx)$ and $y\mapsto \varphi(xyx)$, respectively.

We also consider partial order for positive selfadjoint operators affiliated with $\mathcal{M}$. If $x$ is affiliated with $\mathcal{M}$ we denote it as $x\eta \mathcal{M}$.
For positive selfadjoint $x$, $y$ $\eta$ $\mathcal{M}$ we write $x\leq y$ if and only if $$D(y^\frac{1}{2})\subset D(x^\frac{1}{2}) \text{ and } \|x^\frac{1}{2}f\|^2\leq \|y^\frac{1}{2}f\|^2 \text{ for all } f\in D(y^\frac{1}{2}).$$
If for an increasing net $(x_j)_{j\in J}$ of operators affiliated with $\mathcal{M}$ there exists 
$x=\sup \limits_{j\in J}x_j$, then we write $x_j \nearrow x.$ For a positive selfadjoint operator $x\eta \mathcal{M}$
we use $x_\lambda$ to denote $\lambda x(\lambda+x)^{-1}$ with $\lambda \in \mathbb{R}^+\setminus\{0\}$. From the Spectral theorem it follows that the mapping $\lambda\mapsto x_\lambda\in \mathcal{M}^+$ is monotone operator-valued function and
$\lim\limits_{\lambda \to +\infty} x^\frac{1}{2}_\lambda f = x^\frac{1}{2} f$ for all $f\in D(x^\frac{1}{2})$,
therefore $x_\lambda\nearrow x$.  For  an  unbounded $x$ and $\varphi\in \mathcal{M}^+_*$
we  define $\varphi(x)$ as $\varphi(x):=\lim\limits_{\lambda \to +\infty}\varphi(x_\lambda)$.

From now on $a$ stands for a positive selfadjoint operator affiliated with $\mathcal{M}$. We consider $$\mathfrak{D}^+_a \equiv \{ \varphi \in \mathcal{M}_*\ |\ \varphi(a)<+\infty\},$$
$$\mathfrak{D}_a^h \equiv \mathfrak{D}_a^+ - \mathfrak{D}_a^+ \text{ and } \mathfrak{D}_a \equiv \mathrm{lin}_\mathbb{C} \mathfrak{D}_a^h.$$ Note that if operator $a$ is bounded, then $\mathfrak{D}_a^+ = \mathcal{M}_*^+ , \mathfrak{D}_a^h = \mathcal{M}^h_*$ and $\mathfrak{D}_a = \mathcal{M}_*.$
We define a seminorm $\|\cdot\|_a$ on $\mathfrak{D}^h_a$ as
$$\|\varphi\|_a := \inf\{ \varphi_1(a) + \varphi_2(a)\ |\ \varphi = \varphi_1 - \varphi_2 ; \varphi_1, \varphi_2 \in \mathfrak{D}^+\}.$$

Also, Theorem 2 from [7] states, that if operator a is bounded, then $$\|\varphi\|_a=\|a^\frac{1}{2}\varphi a^\frac{1}{2}\|.$$ If $\|\cdot\|_a$ is a norm, then we call it the $a$-norm. Note that the $\mathbf{1}$-norm coincides with the restriction of the standard norm in $\mathcal{M}_*$ onto $\mathcal{M}_*^h$.

\begin{definition}[\cite{Nov2017}]
By $L_1^h(a)$ we denote the completion of the real normed space $\mathfrak{D}_a^h=\mathrm{lin}_{\mathbb{R}} \mathfrak{D}_a^+$ with the norm $$r_a(\varphi)=\inf\{\varphi_1(a)+\varphi_2(a)\ |\ \varphi=\varphi_1-\varphi_2,\varphi_1,\varphi_2\in \mathfrak{D}^+_a\},$$
where $\mathfrak{D}_a^+=\{\varphi\in \mathcal{M}^+_*\ |\ \varphi(a)<+\infty\}$.
\end{definition}

The dual of $L^h_1 (a)$ is ($L^\mathrm{sa}_\infty(a), \|\cdot\|_a$),
where $$L_\infty (a) \equiv \{x \in (\mathfrak{D}_a )^\mathrm{al}\ |\  \lambda\in \mathbb{R}, - \lambda a \leq x \leq \lambda a\}$$ and $\|x\|_a \equiv \inf\{ \lambda\in \mathbb{R}\ |\ - \lambda a \leq
x \leq \lambda a\}$. We identify the elements of $\mathfrak{D}^h_a$ with the corresponding elements in
$L^h_1 (a)$. Further for an injective operator a we always assume that $L^\mathrm{sa}_\infty(a)$ is equiped with the a-norm.

For $x \in \mathcal{M}$ we define the sesquilinear form a $a^\frac{1}{2} xa^\frac{1}{2}$ on $D(a^\frac{1}{2}) \times D(a^\frac{1}{2})$ by the equality $\widehat{a^\frac{1}{2} xa^\frac{1}{2}} (f,g) := \langle xa^\frac{1}{2} f , a^\frac{1}{2}g\rangle.$ The set of all such sesquilinear forms is
denoted by $$\mathcal{S}_a (\mathcal{M} ) \equiv \{ \widehat{a^\frac{1}{2} xa^\frac{1}{2}}\ |\ x \in \mathcal{M}\}.$$ We consider partial order on $\mathcal{S}_a (\mathcal{M}^\mathrm{sa})$,
such that $$\widehat{a^\frac{1}{2} xa^\frac{1}{2}} \leq \widehat{a^\frac{1}{2} ya^\frac{1}{2}}$$ if and only if $\widehat{a^\frac{1}{2} xa^\frac{1}{2}} ( f , f ) \leq \widehat{a^\frac{1}{2} ya^\frac{1}{2}} ( f , f )$ for all $f \in
D(a^\frac{1}{2})$. By $\mathcal{S}_a (\mathcal{M}^\mathrm{sa})$ we denote the seminormed space of sesquilinear forms $\{ a^\frac{1}{2} xa^\frac{1}{2}\ |\ x \in \mathcal{M}^\mathrm{sa} \}$ equiped with the seminorm $p_a (\widehat{a^\frac{1}{2} x a^\frac{1}{2}} ) := \inf\{ \lambda\in \mathbb{R}^+\ |\ -\lambda \widehat{a^\frac{1}{2}  \mathbf{1}a^\frac{1}{2}} \leq \widehat{a^\frac{1}{2} x a^\frac{1}{2}} \leq \lambda \widehat{a^\frac{1}{2} \mathbf{1} a^\frac{1}{2}}\}.$

\begin{definition}[\cite{BS88}]
Let $(X_0, X_1)$ be the pair of Banach spaces. For $t>0$ and $x\in X_0 + X_1$ let
$$K(x,t;X_0, X_1) = \inf\left\{ \left \|x_0\right\|_{X_0}+t\left\|x_1\right\|_{X_1}\ |\ x= x_0+x_1, x_0\in X_0, x_1\in X_1\right\}.$$
By $K$-method of interpolation we call the construction of the space $K_{\theta,q}(X_0,X_1)$ as the linear subspace of the sum $X_0+X_1$ such that $$\left(\int_0^\infty \left(t^{-\theta}K(x,t;X_0, X_1)\right)^q\frac{dt}{t}\right)^{1/q}<\infty.$$
\end{definition}

\begin{definition}
By $L_{p,q}^h(a)$ we denote the noncommutative Lorentz space with $p,q\in (1,+\infty)$ which is the interpolation space $K_{(p-1)/p,q}(L_1^h(a), L_\infty^h(a))$
\end{definition}

\begin{definition}
By $L_{p}^h(a)$ we denote the noncommutative Lebesgue space which is interpolation space $L_{p,p}^h(a)$.
\end{definition}

\section{Preliminaries}

For $\varphi \in \mathfrak{D}_a$ the equality 
$$a^\frac{1}{2}\varphi a^\frac{1}{2}(x) = \lim\limits_{\lambda \to +\infty} \varphi (a_\lambda^\frac{1}{2} x a_\lambda^\frac{1}{2}) \text{ with } x \in \mathcal{M}$$ defines the normal functional $a^\frac{1}{2} \varphi a^\frac{1}{2}\in \mathcal{M}_*.$

If an operator $a$ is injective then $$\inf \{ \lambda \ |\ -\lambda\widehat{a^\frac{1}{2}\mathbf{1}a^\frac{1}{2}} \leq \widehat{a^\frac{1}{2}xa^\frac{1}{2}} \leq \lambda\widehat{ a^\frac{1}{2}\mathbf{1}a^\frac{1}{2}}\}=\|x\| \text{ for any } x\in \mathcal{M}^\mathrm{sa}$$
and the latter implies that the mapping $u_1: x\mapsto \widehat{a^\frac{1}{2}x a^\frac{1}{2}}$
is an isometrical isomorphism of $\mathcal{M}$  onto $\mathcal{S}_a(\mathcal{M}).$ 

For an injective  operator $a$ the mapping $$u: x \in \mathcal{M}\mapsto \widehat{a^\frac{1}{2}xa^\frac{1}{2}} \in L_\infty(a)$$ is  an  isometrical isomporphism of $\mathcal{M}$ onto $L_\infty(a)$. Thus, $L_\infty(a)$ is isometricaly isomorphic to $\mathcal{S}_a(\mathcal{M}).$ Further we call the isomorphism $u_1^{-1} u$ the canonical isomorphism of $\mathcal{S}_a(\mathcal)\mapsto \mathcal{M}$ onto $ L_\infty(a)$ and identify the corresponding elements. Moreover, the adjoint mapping $u^t$
is an isometrical isomorphism of $L_\infty(a)^*$ onto $\mathcal{M}^*.$

For  an  injective  operator $a$ the continue of the mapping $$v:\varphi \in \mathfrak{D}_a(\subset L_1(a))\mapsto a^\frac{1}{2}\varphi a^\frac{1}{2}\in \mathcal{M}_*$$
is  an  isometrical isomorphism of $L_1(a)$ onto $\mathcal{M}_*$

Let $\mathcal{B}_1$ and $\mathcal{B}_2$ be Banach spaces and $\|\cdot\|_{1}\ \succ\  \|\cdot\|_{2}$, then the embedding $$\mathrm{id}:x\in\mathcal{B}_1 \mapsto x\in\mathcal{B}_2$$ is continuous.

\section{Mixed Limits of Banach Spaces}

By $\mathcal{B}^{k,n}$ we denote two-indexed family of Banach spaces. Let 
$\tau^{k,n}$ be the topology of the norm $\|\cdot\|_{k, n}$ which is natural norm of the Banach space $\mathcal{B}^{k, n}.$

Let $k_1<k_2$ and $n_1<n_2$, $k_i, n_i\in\mathbb{N},$ and consider that

$$
\begin{matrix}\begin{matrix}
\mathcal{B}^{k_1,n_1} & \subset & \mathcal{B}^{k_1,n_2}\\
\cup & \ & \cup\\
\mathcal{B}^{k_2,n_1} & \subset & \mathcal{B}^{k_2,n_2}\\
\end{matrix}
&& \text{ and } &&
\begin{matrix}
\|\cdot\|_{k_1,n_1}& \succ & \|\cdot\|_{k_1,n_2}\\
\curlywedge & \ & \curlywedge \\
\|\cdot\|_{k_2,n_1}& \succ & \|\cdot\|_{k_2,n_2}\\
\end{matrix}
\end{matrix}.
$$

Note that
$$
\begin{matrix}
\tau^{k_1,n_1}|_{k_2} & \supset & \tau^{k_1,n_2}|_{k_2,n_1} \\
\cap & \ & \cap \\
\tau^{k_2,n_1} & \supset & \tau^{k_2,n_2}|_{n_1} \\\end{matrix},
$$
with
$
\tau|_{k_2}\equiv \tau|_{n_1}\equiv \tau|_{k_2,n_1}:=\{X\cap \mathcal{B}^{k_2,n_1}\ |\ X\in \tau\}.
$

Consider the limits
$$
\mathfrak{B}^{k}=\bigcup_{n>0} \mathcal{B}^{k,n} 
\text{ и } 
\mathfrak{B}_{n}=\bigcap_{k>0} \mathcal{B}^{k,n}
$$
with the topology $\tau^k$ and the topology $\tau_n$, respectively. The topology $\tau^k$ is the strongest topology on $\mathfrak{B}^{k}$ such that the mappings
\begin{equation}
\varphi_n^k: x\in (\mathcal{B}^{k,n},\|\cdot\|_{k,n}) \mapsto x\in \mathfrak{B}^k
\end{equation}
are continuous and the topology $\tau_n$ is the weakest on $\mathfrak{B}_{n}$ such that the mappings \begin{equation}
\psi_k^n: x\in \mathfrak{B}_n \mapsto x\in (\mathcal{B}^{k,n}, \|\cdot\|_{k,n})
\end{equation}
are coninuous.

\begin{lemma}
Let $k<n, k,n\in\mathbb{N}$, then
\begin{enumerate}[(i)]
    \item $\mathfrak{B}^k\supset \mathfrak{B}^n$;
    \item $\mathfrak{B}_k\subset \mathfrak{B}_n.$
\end{enumerate}
\end{lemma}
\begin{proof}
\noindent$(i)$
We have
$$\mathfrak{B}^n=\bigcup_{\gamma>0}\mathcal{B}^{n,\gamma},
\ \mathfrak{B}^k=\bigcup_{\gamma>0}\mathcal{B}^{k,\gamma} \text{ и } \mathcal{B}^{n,\gamma}\subset \mathcal{B}^{k,\gamma}.$$
Evidently, if $x\in\mathfrak{B}^n$, then there exists $\gamma_0$ such that $x\in\mathcal{B}^{n,\gamma_0}\subset \mathcal{B}^{k,\gamma_0}.$ Therefore, $x\in\mathfrak{B}^k.$

\noindent$(ii)$
If $x\in\mathfrak{B}_k$, then $\forall \gamma>0 \ x\in \mathcal{B}^{\gamma,k}\subset \mathcal{B}^{\gamma,n}.$ Thus, $$x\in\bigcap\limits_{\gamma>0}\mathcal{B}^{\gamma,n}=\mathfrak{B}^n.$$
\end{proof}

\begin{definition}
We call 
$$\overline{\mathfrak{L}}= \overline{\mathrm{\lim}}\mathcal{B}^{k,k}\equiv\bigcap_{k>0}\bigcup_{n\geq k}\mathcal{B}^{n,n}$$
an {\it upper limit}.
\end{definition}

\begin{definition}
And we call
$$\underline{\mathfrak{L}}=\underline{\mathrm{\lim}}\mathcal{B}^{k,k}\equiv\bigcup_{k>0}\bigcap_{n \geq k}\mathcal{B}^{n,n}$$
a {\it lower limit}.
\end{definition}

\begin{proposition}
$$\overline{\mathfrak{L}}\supset\underline{\mathfrak{L}}.$$
\end{proposition}
\begin{proof}
If $k_1<k_2$, then $\mathcal{B}^{k_1,n}\supset \mathcal{B}^{k_2,n}.$ Therefore, 
$$\overline{\mathfrak{L}}=\bigcap_{k>0}\bigcup_{n>0}\mathcal{B}^{k,n}=\bigcap_{k>0}\bigcup_{n \geq k}\mathcal{B}^{k,n}\supset
\bigcap_{k>0}\bigcup_{n \geq k}\mathcal{B}^{n,n}.$$
If $n_1<n_2$, then $\mathcal{B}^{k,n_1}\subset \mathcal{B}^{k,n_2}.$
Thus,
$$\bigcup_{k>0}\bigcap_{n \geq k} \mathcal{B}^{n,n}\supset \bigcup_{k>0}\bigcap_{n \geq k}\mathcal{B}^{n,k}=\bigcup_{k>0}\bigcap_{n>0}\mathcal{B}^{n,k}=\underline{\mathfrak{L}}.$$
\end{proof}

\begin{definition}
The family $\mathcal{B}^{k,n}$ is called {\it converging} if $\underline{\mathfrak{L}}=\overline{\mathfrak{L}}$.
\end{definition}

\begin{lemma}
Let $k<n, k,n\in\mathbb{N}$ and $$\tau^k|_n:=\{X\cap \mathfrak{B}^n\ |\ X\in \tau^k\}$$ (i.e. the topology induced by the $\tau^k$ on $\mathfrak{B}_n$), then $$\tau^k|_n\subset \tau^n.$$ \end{lemma}
\begin{proof}
Let $X_0 \in \tau^k|_n$, i.e. $X_0=X\cap\mathfrak{B}^k$, where $X\in \tau^k$.
Then $(\varphi^k_\gamma)^{-1}(X)$ is open in $(\mathcal{B}^{k,\gamma}, \|\cdot\|_{k, \gamma})$. The embedding
$$m_\gamma^{n, k}: x\in (\mathcal{B}^{n, \gamma},\|\cdot\|_{n,\gamma}) \mapsto x\in (\mathcal{B}^{k, \gamma}, \|\cdot\|_{k, \gamma})$$ is continuous, therefore
$$(\varphi_\gamma^k m_\gamma^{n,k})^{-1}(X)=(m_\gamma^{n, k})^{-1}(\varphi_\gamma^k)^{-1}(X)$$ is open in  $(\mathcal{B}^{n, \gamma},\|\cdot\|_{n, \gamma})$ for any $\gamma>0.$

The topology $\tau^n$ is the strongest topology, such that any embedding $\varphi_\gamma^n$ is continuous. If $X_0\notin \tau^n$, then there exists topology 
$$\tau=\tau^n\cup\{X_0\cap Y\ |\ Y\in\tau^n\}\cup\{X_0\cup Y\ |\ Y\in \tau^n\}$$
stronger, then the topology $\tau^n$, $X_0\in \tau$. We show that for any $A\in \tau$
the preimage $(\varphi^n_\gamma)^{-1}(A)$ is open.

Consider three cases $A\in \tau^n$, $A=X_0\cap Y$, $A=X_0\cup Y$ ($Y\in \tau^n$).

\begin{enumerate}
    \item  If $A\in \tau^n$, then $(\varphi_\gamma^\beta)^{-1}(A)$ is open.
    \item If $A=X_0\cap Y$, then $$(\varphi_\gamma^n)^{-1}(X\cap \mathfrak{B}^n \cap Y)=(\varphi_\gamma^n)^{-1}\left(X\cap\bigcup\limits_{\gamma>0}\mathcal{B}^{n,\gamma}\right) \cap (\varphi_\gamma^n)^{-1}(Y)=$$
$$=\left(\bigcup_{\gamma>0}(\varphi_\gamma^n)^{-1}\left(X\cap \mathcal{B}^{n,\gamma}\right)\right)\cap(\varphi_\gamma^n)^{-1}(Y)=\left(\bigcup_{\gamma>0}\left(\varphi^k_\gamma m_\gamma^{n,k}\right)^{-1}\left(X\right)\right)\cap(\varphi_\gamma^n)^{-1}(Y).$$
is open, since $\left(\varphi^k_\gamma m_\gamma^{n,k}\right)^{-1}(X)$ and $\left(\varphi_\gamma^n\right)^{-1}(Y)$ are open.
    \item If $A=X_0\cup Y$, then $$(\varphi_\gamma^n)^{-1}\left((X\cap \mathfrak{B}^n) \cup Y\right)=(\varphi_\gamma^n)^{-1}\left(X\cap\bigcup\limits_{\gamma>0}\mathcal{B}^{n,\gamma}\right) \cup (\varphi_\gamma^n)^{-1}(Y)=$$
$$=\left(\bigcup_{\gamma>0}(\varphi_\gamma^n)^{-1}\left(X\cap \mathcal{B}^{n,\gamma}\right)\right)\cup(\varphi_\gamma^n)^{-1}(Y)=\left(\bigcup_{\gamma>0}\left(\varphi^k_\gamma m_\gamma^{n,k}\right)^{-1}\left(X\right)\right)\cup(\varphi_\gamma^n)^{-1}(Y)$$ is open, since$\left(\varphi^k_\gamma m_\gamma^{n,k}\right)^{-1}(X)$ and $\left(\varphi_\gamma^n\right)^{-1}(Y)$ are open.
\end{enumerate}

Thus, we get the contradiction with the maximality of the topology $\tau^n$, therefore $X_0\in \tau^n.$
\end{proof}

\begin{lemma}
Let $k<n, k,n \in \mathbb{N}$ and
$$\tau_n|_k:=\{X\cap \mathfrak{B}_k\ |\ X\in \tau_n\}$$ be the topology induced by the topology $\tau_n$ on $\mathfrak{B}_k,$ then
$$\tau_k\supset\tau_n|_k.$$
\end{lemma}
\begin{proof}
The topology $\tau_k$ is determinded by the family of semi-norms $\{\|\cdot\|_{\gamma, k}\}_{\gamma=1}^\infty$, and the topology $\tau_n|_k$ is determined by the family $\{\|\cdot\|_{\gamma,n }\}_{\gamma=1}^\infty.$
Evidently, that  $\|\cdot\|_{\gamma,n}\prec \|\cdot\|_{\gamma,k}.$
\end{proof}

For $\underline{\mathfrak{L}}$ define the topology $\overline{\tau}$ the strongest topology, such that any embedding
\begin{equation}
\Phi_k: x\in (\mathfrak{B}_k, \tau_k) \mapsto x\in (\underline{\mathfrak{L}}, \overline{\tau})
\end{equation}
is continuous.

Also, determine the topology $\underline{\tau},$
such that it will be the weakest topology on $\overline{\mathfrak{L}}$ with the embedings
\begin{equation}
\Psi_k: x\in (\overline{\mathfrak{L}},\underline{\tau})\mapsto x\in (\mathfrak{B}^k, \tau^k)
\end{equation}
being continuous.

\begin{theorem}
The embedding
\begin{equation}
    \Lambda: x\in (\underline{\mathfrak{L}}, \overline{\tau}) \mapsto x\in (\overline{\mathfrak{L}}, \underline{\tau})
\end{equation}
is continuous.
\end{theorem}
\begin{proof}
Note that $$\forall n_0>0\  \overline{\tau}|_{n_0}=\bigcap_{n\geq n_0}\tau_n|_{n_0}\text{ и } \underline{\tau}=\bigcup_{k>0}\tau^k|_\infty.$$
At the same time
$$\tau_n=\bigcup_{k>0}\tau^{k,n}|_{\infty} \text{ и }
\forall n_0>0 \ \tau^k|_{n_0}=\bigcap_{n\geq n_0}\tau^{k,n}|_{n_0}.$$

It is sufficient to prove that for any $n_0>0$ we have the inclusion
$$\overline{\tau}|_{n_0}\supset \underline{\tau}|_{n_0},\text{ where } \tau|_{n_0}=\{X\cap \mathfrak{B}_{n_0}\ |\ X\in \tau\}.$$
Thus
 $$\overline{\tau}|_{n_0}=\bigcap_{n\geq n_0}\left(\bigcup_{k>0}\tau^{k,n}|_\infty\right)|_{n_0}$$ 
 и $$\underline{\tau}|_{n_0}=\left(\bigcup_{k>0}\tau^k|_{\infty}\right)|_{n_0}.$$
Note, that by reduction we obtain $\mathfrak{B}_{n_0}$ space with the topology: 
$$\overline{\tau}|_{n_0}=\bigcap_{n\geq n_0}\left(\bigcup_{k>0}\left(\tau^{k,n}|_{n_0}\right)\right) \text{ и } \underline{\tau}|_{n_0}=\bigcup_{k>0}\left(\tau^k|_{n_0}\right).$$
But then $\tau^k|_{n_0}=\bigcap_{n\geq n_0}\tau^{k,n}|_{n_0},$ thus
$$\overline{\tau}|_{n_0}=\bigcap_{n\geq n_0}\left(\bigcup_{k>0}\left(\tau^{k,n}|_{n_0}\right)\right) \supset \bigcup_{k>0}\left(\bigcap_{n\geq n_0}\left(\tau^{k,n}|_{n_0}\right)\right)=\underline{\tau}|_{n_0}.$$
\end{proof}

\section{Limits of noncommutative $L_\infty$ spaces} 

We split this section into four parts. Firstly, we consider general properties for $L_\infty(a)$ spaces and its norms. Secondly, we consider case of bounded $a$, in the third case we consider unbounded $a$ such that $a^{-1}$ is bounded, and at last we consider the general case of unbounded $a$.

\subsection{Case of bounded operator}

\begin{lemma}
Let $a\eta\mathcal{M}$ (a is affiliated with $\mathcal{M}$, $a\geq \mathbf{0}$), then  $\mathcal{S}_a(\mathcal{M})=\mathcal{S}_{\lambda a}(\mathcal{M})$ for any $\lambda>0.$
\end{lemma}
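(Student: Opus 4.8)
The plan is to reduce everything to the elementary observation that scaling $a$ by $\lambda>0$ only rescales the middle operator, which stays inside $\mathcal{M}$. The single fact I need from the functional calculus is the identity $(\lambda a)^{\frac{1}{2}} = \sqrt{\lambda}\,a^{\frac{1}{2}}$, together with the equality of domains $D\big((\lambda a)^{\frac{1}{2}}\big) = D\big(a^{\frac{1}{2}}\big)$. Both follow from the Spectral theorem applied to the positive selfadjoint operator $a\eta\mathcal{M}$: for $\lambda>0$ the function $t\mapsto (\lambda t)^{\frac{1}{2}} = \sqrt{\lambda}\,t^{\frac{1}{2}}$ agrees up to the positive constant $\sqrt{\lambda}$ with $t\mapsto t^{\frac{1}{2}}$ on the spectrum, so the two operators share the same spectral domain and differ only by the scalar $\sqrt{\lambda}$. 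This is important because the forms in $\mathcal{S}_a(\mathcal{M})$ and $\mathcal{S}_{\lambda a}(\mathcal{M})$ are, a priori, defined on different product domains, and I must first confirm these domains coincide before comparing the forms pointwise.

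Granting this, the core computation is to evaluate a generic form in $\mathcal{S}_{\lambda a}(\mathcal{M})$. For $y\in\mathcal{M}$ and $f,g\in D(a^{\frac{1}{2}})$ I would substitute $(\lambda a)^{\frac{1}{2}}=\sqrt{\lambda}\,a^{\frac{1}{2}}$ into the defining equality and pull the real scalars out of the inner product, obtaining
$$\widehat{(\lambda a)^\frac{1}{2} y(\lambda a)^\frac{1}{2}}(f,g) = \langle y\sqrt{\lambda}\,a^{\frac{1}{2}}f,\ \sqrt{\lambda}\,a^{\frac{1}{2}}g\rangle = \lambda\,\langle y\,a^{\frac{1}{2}}f,\ a^{\frac{1}{2}}g\rangle = \widehat{a^\frac{1}{2}(\lambda y)a^\frac{1}{2}}(f,g).$$
Thus every form built from $\lambda a$ with middle operator $y$ equals the form built from $a$ with middle operator $\lambda y\in\mathcal{M}$, giving the inclusion $\mathcal{S}_{\lambda a}(\mathcal{M})\subseteq\mathcal{S}_a(\mathcal{M})$.

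For the reverse inclusion I would run the identical calculation in the other direction: given $x\in\mathcal{M}$, the form $\widehat{a^\frac{1}{2} x a^\frac{1}{2}}$ coincides with $\widehat{(\lambda a)^\frac{1}{2}(\lambda^{-1}x)(\lambda a)^\frac{1}{2}}$, and since $\lambda^{-1}x\in\mathcal{M}$ this exhibits it as a member of $\mathcal{S}_{\lambda a}(\mathcal{M})$. The two inclusions together yield the claimed equality of sets. I do not expect a genuine obstacle here: the only point requiring care is the domain and square-root identity for a possibly unbounded $a$, which is exactly where one must invoke the Spectral theorem rather than argue naively with operator products; once that is in place, the statement is purely a matter of absorbing the positive scalar $\lambda$ into the middle operator using the closure of $\mathcal{M}$ under scalar multiplication.
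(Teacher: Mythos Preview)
Your proof is correct and follows essentially the same approach as the paper: both invoke the spectral-calculus identity $(\lambda a)^{1/2}=\sqrt{\lambda}\,a^{1/2}$ with the accompanying domain equality, then reduce the statement to absorbing the scalar $\lambda$. The only cosmetic difference is that the paper absorbs $\lambda$ into the form itself (writing $\lambda\,\widehat{a^{1/2}xa^{1/2}}=\widehat{(\lambda a)^{1/2}x(\lambda a)^{1/2}}$ and then appealing to the linearity of the two spaces), whereas you absorb it into the middle operator via $\lambda y\in\mathcal{M}$ and $\lambda^{-1}x\in\mathcal{M}$; these are equivalent one-line observations.
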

\begin{proof}
Let $\widehat{a^\frac{1}{2}x a^\frac{1}{2}}\in \mathcal{S}_a(\mathcal{M})$. Evidently,  $D(a^\frac{1}{2})=D((\lambda a)^\frac{1}{2}) = D(\lambda^\frac{1}{2} a^\frac{1}{2})$, thus $$\widehat{a^\frac{1}{2}x a^\frac{1}{2}}(f,g)=\langle x a^\frac{1}{2}f, a^\frac{1}{2}g\rangle=
\langle x \frac{1}{\lambda^\frac{1}{2}}(\lambda a)^\frac{1}{2}f, \frac{1}{\lambda^\frac{1}{2}}(\lambda a)^\frac{1}{2}g\rangle=$$
$$=\frac{1}{\lambda} \langle x (\lambda a)^\frac{1}{2} f, 
(\lambda a)^\frac{1}{2} g\rangle = \frac{1}{\lambda} \widehat{(\lambda a)^\frac{1}{2} x (\lambda a)^\frac{1}{2}}(f,g) \text{ for any } f,g \in D(a^\frac{1}{2}).$$

Hence, $\lambda \widehat{a^\frac{1}{2} x a^\frac{1}{2}}=\widehat{(\lambda a)^\frac{1}{2}x (\lambda a)^\frac{1}{2}}$, and since $\mathcal{S}_a(\mathcal{M})$, $\mathcal{S}_{\lambda a}(\mathcal{M})$ are linear spaces, we have
$\mathcal{S}_a(\mathcal{M})=\mathcal{S}_{\lambda a }(\mathcal{M})$.\end{proof}

Particularly, if $a$ is bounded, then $\mathcal{S}_a(\mathcal{M})=\mathcal{S}_{a_0}(\mathcal{M}),$ where $a_0=a/\|a\|$, if
$a$ is such that $a^{-1}$ is bounded, then $\mathcal{S}_a(\mathcal{M})=\mathcal{S}_{a_\infty}(\mathcal{M})$, where $a_\infty = \|a^{-1}\|a$.

\begin{proposition}
Let $a\eta\mathcal{M}$, $a\geq \mathbf{0}$, then $\|\cdot\|_a$ in $L_\infty(a)$ is equivalent to $\|\cdot\|_{\lambda a}$, where $\lambda>0$.
\end{proposition}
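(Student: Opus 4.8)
The plan is to prove the stronger fact that the two (semi)norms are actually proportional, namely $\|\cdot\|_{\lambda a} = \frac{1}{\lambda}\|\cdot\|_a$ on $L_\infty(a)$, from which the asserted equivalence is immediate. First I would record that, viewed as elements of the algebraic dual $(\mathfrak{D}_a)^{\mathrm{al}}$, the functionals $a$ and $\lambda a$ satisfy $\varphi(\lambda a) = \lambda\varphi(a)$ for every $\varphi\in\mathfrak{D}_a^+$. For bounded $a$ this is clear; for unbounded $a$ one uses the limit definition $\varphi(\lambda a) = \lim_{\mu\to+\infty}\varphi\big((\lambda a)_\mu\big)$ together with the scaling identity $(\lambda a)_\mu = \lambda\, a_{\mu/\lambda}$, which follows by a direct spectral computation from $(\lambda a)_\mu = \mu(\lambda a)(\mu+\lambda a)^{-1}$. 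Passing to the limit $\mu\to+\infty$ (so that $\mu/\lambda\to+\infty$) then gives $\varphi(\lambda a)=\lambda\varphi(a)$.

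As an immediate consequence $\mathfrak{D}_{\lambda a}^+ = \mathfrak{D}_a^+$ (finiteness of $\varphi(\lambda a)$ is equivalent to finiteness of $\varphi(a)$), hence $\mathfrak{D}_{\lambda a}=\mathfrak{D}_a$, the algebraic duals coincide, and they carry the same positive cone induced by $\mathfrak{D}_a^+$. Therefore $L_\infty(\lambda a)=L_\infty(a)$ as ordered vector spaces, so that $\|\cdot\|_a$ and $\|\cdot\|_{\lambda a}$ are genuinely two (semi)norms on one and the same space and the comparison is meaningful.

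The core step is then purely order-theoretic: for $x\in L_\infty(a)$ and $\mu\in\mathbb{R}$, the relation $-\mu(\lambda a)\leq x\leq \mu(\lambda a)$ in $(\mathfrak{D}_a^h)^{\mathrm{al}}$ is, using $\lambda a=\lambda\cdot a$ as functionals, literally the same as $-(\mu\lambda) a\leq x\leq (\mu\lambda) a$. Taking the infimum over the admissible $\mu$ and substituting $\lambda'=\mu\lambda$ yields $\|x\|_{\lambda a}=\frac{1}{\lambda}\|x\|_a$, which proves the proposition.

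I expect the only genuine obstacle to be the first paragraph: rigorously justifying $\varphi(\lambda a)=\lambda\varphi(a)$ for unbounded $a$ through the approximants $a_\mu = \mu a(\mu+a)^{-1}$, and confirming that the positive cones of $(\mathfrak{D}_{\lambda a})^{\mathrm{al}}$ and $(\mathfrak{D}_a)^{\mathrm{al}}$ coincide so the two order relations live on the same space. Everything after that is an elementary rescaling. Alternatively one can argue entirely inside the sesquilinear-form model: Lemma 1 gives $\widehat{(\lambda a)^\frac{1}{2} x (\lambda a)^\frac{1}{2}} = \lambda\,\widehat{a^\frac{1}{2} x a^\frac{1}{2}}$, and since the reference form $\widehat{a^\frac{1}{2}\mathbf{1} a^\frac{1}{2}}$ scales by the same factor $\lambda$, comparing the defining inequalities of $p_a$ and $p_{\lambda a}$ for a fixed form again produces the factor $\frac{1}{\lambda}$, in agreement with the computation in the dual picture.
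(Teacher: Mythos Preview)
Your argument is correct and actually establishes the exact proportionality $\|\cdot\|_{\lambda a}=\tfrac{1}{\lambda}\|\cdot\|_a$, which is precisely what the paper obtains as well.

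However, your \emph{primary} route differs from the paper's. You work directly in the algebraic-dual picture: after checking $\varphi(\lambda a)=\lambda\varphi(a)$ (your verification via the scaling identity $(\lambda a)_\mu=\lambda\,a_{\mu/\lambda}$ is correct), you identify $\mathfrak{D}_{\lambda a}=\mathfrak{D}_a$ and then rescale the order-unit inequality $-\mu(\lambda a)\le x\le \mu(\lambda a)$ to read off the factor $\tfrac{1}{\lambda}$. The paper instead passes immediately to the sesquilinear-form model: it invokes the isometric isomorphism $L_\infty(a)\cong\mathcal{S}_a(\mathcal{M})\cong\mathcal{M}$ from the Preliminaries, writes $x=\widehat{a^{1/2}ya^{1/2}}$ with $\|x\|_a=\|y\|$, and then computes $\|x\|_{\lambda a}=\|\tfrac{1}{\lambda}(\lambda a)^{1/2}y(\lambda a)^{1/2}\|_{\lambda a}=\tfrac{1}{\lambda}\|y\|$ using Lemma~1. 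This is exactly the alternative you sketch in your last paragraph.

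What each buys: your dual-cone argument is self-contained---it uses only the definitions of $\mathfrak{D}_a$, $L_\infty(a)$ and $\|\cdot\|_a$, and in particular does not rely on the isometric identifications $L_\infty(a)\cong\mathcal{S}_a(\mathcal{M})\cong\mathcal{M}$ (which in the paper require $a$ to be injective). The paper's computation is shorter once that machinery is in place and meshes directly with the form-level Lemma~1, which is the language used throughout the rest of the article. Either way the conclusion is the same identity $\|x\|_{\lambda a}=\tfrac{1}{\lambda}\|x\|_a$.
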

\begin{proof}
Since, $L_\infty(a)$ is isometrically isomorphic to $\mathcal{S}_a(\mathcal{M})$ for $x\in L_\infty(a)$ we may consider $x=\widehat{a^\frac{1}{2}ya^\frac{1}{2}},$ where $y\in \mathcal{M}$, then $\|x\|_a=\|y\|$.
On the other hand, $$\|x\|_{\lambda a}=\|\widehat{a^\frac{1}{2}ya^\frac{1}{2}}\|_{\lambda a}=
\left\|\frac{\lambda ^{1/2}}{\lambda ^{1/2}}a^\frac{1}{2}ya^\frac{1}{2}\frac{\lambda ^{1/2}}{\lambda ^{1/2}}\right\|_{\lambda a}=$$
$$=\left\|\frac{1}{\lambda} (\lambda a)^\frac{1}{2}y(\lambda a)^\frac{1}{2}\right\|_{\lambda a}=\frac{1}{\lambda}\left\|(\lambda a)^\frac{1}{2}y(\lambda a)^\frac{1}{2}\right\|_{\lambda a}=\frac{1}{\lambda}\|y\|=\frac{1}{\lambda}\|x\|_a$$
\end{proof}

Particularly, for the bounded $a\in\mathcal{M}^+$  we have $\|\cdot\|_a$ is equivalent to $\|\cdot\|_{a_0}$, where $a_0=a/\|a\|.$ If $a^{-1}$ is bounded and we consider $a_\infty=\|a^{-1}\|a$, then $\|\cdot\|_{a_\infty}$ is equivalent to $\|\cdot\|_{a}$.

Now, consider $L_\infty(a^\alpha).$ 

\begin{lemma}
For a bounded $a\in \mathcal{M}^+$ and $\alpha,\beta\in\mathbb{R}^+$, if $\alpha<\beta$, then

$$L_\infty(a^\alpha)\supset L_\infty(a^\beta).$$
\end{lemma}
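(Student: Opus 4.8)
The plan is to reduce the set-theoretic inclusion to a single operator inequality between $a^\beta$ and $a^\alpha$ and then to read off the domination condition that defines membership in $L_\infty(a^\alpha)$. First I would record that, because $a$ is bounded, $a^\gamma$ is bounded for every $\gamma\in\mathbb{R}^+$, so $\varphi(a^\gamma)<+\infty$ for every $\varphi\in\mathcal{M}_*^+$; consequently $\mathfrak{D}_{a^\gamma}^+=\mathcal{M}_*^+$ and $\mathfrak{D}_{a^\gamma}=\mathcal{M}_*$ for all $\gamma$. This is what makes the statement meaningful: the spaces $L_\infty(a^\alpha)$ and $L_\infty(a^\beta)$ are both subsets of the one algebraic dual $(\mathcal{M}_*)^{\mathrm{al}}$, so the inclusion is an honest comparison inside a common ambient space, and the order relations $-\lambda a^\gamma\le x\le\lambda a^\gamma$ are evaluated against the same positive cone $\mathcal{M}_*^+$.

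The key step is the spectral estimate. Put $M=\|a\|$, so that $\mathbf{0}\le a\le M\mathbf{1}$ and the spectrum of $a$ lies in $[0,M]$. For $t\in[0,M]$ and $\alpha<\beta$ one has $t^\beta=t^\alpha t^{\beta-\alpha}\le M^{\beta-\alpha}t^\alpha$, since $t^{\beta-\alpha}\le M^{\beta-\alpha}$. Applying the functional calculus for $a$ to the scalar inequality $t^\beta\le M^{\beta-\alpha}t^\alpha$ yields the operator inequality $a^\beta\le M^{\beta-\alpha}a^\alpha$. Evaluating this against an arbitrary $\varphi\in\mathcal{M}_*^+$ and using positivity gives $\varphi(a^\beta)\le M^{\beta-\alpha}\varphi(a^\alpha)$ for every positive normal functional.

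With this in hand the conclusion is immediate. Given $x\in L_\infty(a^\beta)$ there is $\lambda\ge 0$ with $-\lambda a^\beta\le x\le\lambda a^\beta$, i.e. $-\lambda\varphi(a^\beta)\le x(\varphi)\le\lambda\varphi(a^\beta)$ for all $\varphi\in\mathcal{M}_*^+$. Combining with the previous estimate gives $x(\varphi)\le\lambda\varphi(a^\beta)\le\lambda M^{\beta-\alpha}\varphi(a^\alpha)$ and symmetrically $x(\varphi)\ge-\lambda M^{\beta-\alpha}\varphi(a^\alpha)$, so that $-\mu a^\alpha\le x\le\mu a^\alpha$ with $\mu=\lambda M^{\beta-\alpha}$. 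Hence $x\in L_\infty(a^\alpha)$, which is the desired inclusion.

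I expect the only genuinely delicate point to be the bookkeeping around the ambient space and the order: one must confirm that boundedness of $a$ collapses all the domains $\mathfrak{D}_{a^\gamma}$ to $\mathcal{M}_*$, so that elements of the various $L_\infty(a^\gamma)$ are comparable as functionals on one fixed space, and that the inequalities in the definition of $L_\infty$ are read pointwise on $\mathcal{M}_*^+$. The spectral inequality itself is routine, and no injectivity of $a$ is needed, so the argument applies to the general bounded positive $a$ of the statement. I would also note that the constant $M^{\beta-\alpha}$ shows the inclusion map is in fact bounded, although the lemma asserts only set inclusion.
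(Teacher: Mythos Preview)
Your proof is correct but takes a different route from the paper's. The paper argues through the identification of $L_\infty(a^\gamma)$ with the space $\mathcal{S}_{a^\gamma}(\mathcal{M})$ of sesquilinear forms: given $y\in L_\infty(a^\beta)$ it writes $y=\widehat{a^{\beta/2}xa^{\beta/2}}$ for some $x\in\mathcal{M}$ and then observes that $y=\widehat{a^{\alpha/2}x'a^{\alpha/2}}$ with $x'=a^{(\beta-\alpha)/2}xa^{(\beta-\alpha)/2}\in\mathcal{M}$ (bounded because $a$ is), whence $y\in L_\infty(a^\alpha)$. You instead stay with the order-unit definition and derive the inclusion from the spectral inequality $a^\beta\le\|a\|^{\beta-\alpha}a^\alpha$. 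What your approach buys is that it avoids the canonical isomorphism $L_\infty(a^\gamma)\cong\mathcal{S}_{a^\gamma}(\mathcal{M})$, which in the paper is stated only for injective $a$, so your argument covers the general bounded positive $a$ of the lemma without that extra hypothesis; it also delivers the explicit constant $\|a\|^{\beta-\alpha}$, which is precisely the content of the paper's next lemma on the norm comparison. The paper's route, by contrast, establishes the sesquilinear-form picture that is reused verbatim in the unbounded cases later in the section, so its advantage is uniformity of method across all three subcases.
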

\begin{proof}
    Let $y\in L_\infty(a^\beta)$, then we consider $y=\widehat{a^{\beta/2}x a^{\beta/2}},$ where $x\in \mathcal{M}$. Thus,
    $$y(f,g)=\widehat{a^{\beta/2}x a^{\beta/2}}(f,g)=
    \langle x a^{\beta/2} f, a^{\beta/2} g\rangle =$$
    $$=\langle (a^{(\beta-\alpha)/2} x a^{(\beta-\alpha)/2}) a^{\alpha/2} f, a^{\alpha/2}  g\rangle \text{ for any } f,g\in D(a^\frac{1}{2}).$$
Note, that $x':=a^{(\beta-\alpha)/2} x a^{(\beta-\alpha)/2}\in\mathcal{M}$, thus $$y=\widehat{a^{\alpha/2}x' a^{\alpha/2}}\in L_\infty(a^\alpha).$$
\end{proof}

The following definition is standard.
\begin{definition}
Let $X\supset Y$ be normed spaces with the norms $\|~\cdot~\|_X, \|~\cdot~\|_Y$, respectively. We write $\|\cdot\|_X\prec \|\cdot \|_Y$ if and only if there exists $C\in \mathbb{R}^+$, such that for any $x\in Y$ the inequality $\|x\|_X\leq C\|x\|_Y$ holds.
\end{definition}
 
\begin{lemma}
For a bounded $a\in \mathcal{M}^+$ and $\alpha, \beta\in \mathbb{R}^+$. If $\alpha<\beta$, then
$$\|\cdot\|_{a^\alpha}\prec \|\cdot \|_{a^\beta}.$$
\end{lemma}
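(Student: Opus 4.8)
The plan is to bypass the isometry with $\mathcal{M}$ (which is only guaranteed for injective $a$) and argue directly with the order-unit seminorms $p_{a^\alpha}$ and $p_{a^\beta}$ that define $\|\cdot\|_{a^\alpha}$ and $\|\cdot\|_{a^\beta}$. Fix $y\in L_\infty(a^\beta)$; by the previous lemma $y\in L_\infty(a^\alpha)$ as well, so both seminorms are defined on $y$, and since $a$ is bounded we have $D(a^{\alpha/2})=D(a^{\beta/2})=H$, so every form in sight is defined on all of $H\times H$ and may be compared pointwise. The goal is to produce a constant $C$, independent of $y$, with $\|y\|_{a^\alpha}\le C\|y\|_{a^\beta}$; I claim $C=\|a\|^{\beta-\alpha}$ works.

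The one analytic input is a domination between the two order units. I would show
$$\widehat{a^{\beta/2}\mathbf{1}a^{\beta/2}}\le \|a\|^{\beta-\alpha}\,\widehat{a^{\alpha/2}\mathbf{1}a^{\alpha/2}}.$$
Evaluating both forms on the diagonal, this is exactly the operator inequality $a^\beta\le\|a\|^{\beta-\alpha}a^\alpha$, i.e. $\langle a^\beta f,f\rangle\le\|a\|^{\beta-\alpha}\langle a^\alpha f,f\rangle$ for all $f\in H$. This follows from the Spectral theorem: on the spectrum of $a$, contained in $[0,\|a\|]$, one has $t^{\beta-\alpha}\le\|a\|^{\beta-\alpha}$ because $\beta-\alpha>0$, whence $t^\beta\le\|a\|^{\beta-\alpha}t^\alpha$. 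This is the step where boundedness is essential: for unbounded $a$ the factor $t^{\beta-\alpha}$ is unbounded on the spectrum and no such constant exists.

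Finally I would feed this domination into the definition of the seminorm. For any $\lambda>\|y\|_{a^\beta}$ we have $-\lambda\,\widehat{a^{\beta/2}\mathbf{1}a^{\beta/2}}\le y\le\lambda\,\widehat{a^{\beta/2}\mathbf{1}a^{\beta/2}}$; combining the upper estimate with the order-unit domination gives $y\le\lambda\|a\|^{\beta-\alpha}\widehat{a^{\alpha/2}\mathbf{1}a^{\alpha/2}}$, and, since the domination also yields $-\widehat{a^{\beta/2}\mathbf{1}a^{\beta/2}}\ge-\|a\|^{\beta-\alpha}\widehat{a^{\alpha/2}\mathbf{1}a^{\alpha/2}}$, the lower estimate gives $y\ge-\lambda\|a\|^{\beta-\alpha}\widehat{a^{\alpha/2}\mathbf{1}a^{\alpha/2}}$. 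Hence $\lambda\|a\|^{\beta-\alpha}$ is admissible for $p_{a^\alpha}(y)$, so $\|y\|_{a^\alpha}\le\|a\|^{\beta-\alpha}\lambda$; letting $\lambda\downarrow\|y\|_{a^\beta}$ yields $\|y\|_{a^\alpha}\le\|a\|^{\beta-\alpha}\|y\|_{a^\beta}$, which is the assertion with $C=\|a\|^{\beta-\alpha}$. The whole argument is routine once the order-unit domination is in hand, so that inequality is the only real obstacle — and it is mild; the only point requiring care is to run the estimate at the level of seminorms (the forms $y$) rather than via the operators $x$, $x'=a^{(\beta-\alpha)/2}xa^{(\beta-\alpha)/2}$, so that injectivity of $a$ is never needed.
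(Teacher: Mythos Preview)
Your proof is correct and takes a genuinely different route from the paper's. The paper writes $y=\widehat{a^{\beta/2}xa^{\beta/2}}$ with $x\in\mathcal{M}$, invokes the isometric identification $\mathcal{S}_{a^\gamma}(\mathcal{M})\cong\mathcal{M}$ to compute $\|y\|_{a^\alpha}=\|a^{(\beta-\alpha)/2}xa^{(\beta-\alpha)/2}\|$ and $\|y\|_{a^\beta}=\|x\|$, and then bounds the former by $\|a^{(\beta-\alpha)/2}\|^2\|x\|=\|a^{\beta-\alpha}\|\,\|y\|_{a^\beta}$ via submultiplicativity of the operator norm. This is short but, as you point out, the isometry is only set up in the preliminaries for injective $a$. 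Your argument replaces that machinery with the single spectral inequality $a^\beta\le\|a\|^{\beta-\alpha}a^\alpha$ between the two order units and runs the estimate directly at the level of the order-unit seminorm $p_{a^\alpha}$, so injectivity is never needed. Both approaches yield the same constant (indeed $\|a^{\beta-\alpha}\|=\|a\|^{\beta-\alpha}$ for bounded $a\ge\mathbf{0}$); the paper's is a one-line computation once the isomorphism is granted, while yours is self-contained and covers degenerate $a$ as well.
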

\begin{proof}
Let $y\in L_\infty(a^\beta),$ then $y=\widehat{a^{\beta/2}x a^{\beta/2}}$, where $x\in \mathcal{M}$, then
$$\|y\|_{a^\alpha} = \|\widehat{a^{\beta/2}x a^{\beta/2}}\|_{a^\alpha} =\|a^{(\beta-\alpha)/2}x a^{(\beta-\alpha)/2}\|\leq$$
$$\leq \|a^{(\beta-\alpha)/2}\|^2\|x\|=\|a^{(\beta-\alpha)}\|\|x\|=\|a^{(\beta-\alpha)}\|\|y\|_{a^\beta}$$
\end{proof}

Now, let us consider the limit space.
\begin{definition}
For the bounded $a\in \mathcal{M}^+$ we define the topological space $(\mathcal{L}_\infty(a), \tau(a))$ as the limit space for $L_\infty(a^\alpha)$, where $\mathcal{L}_\infty(a):=\bigcap\limits_{\alpha>1}L_\infty(a^\alpha)$ and $\tau(a):=\bigcup\limits_{\alpha>1} \tau_\infty(a^\alpha)$, $\tau_\infty(a^\alpha)=\{X\cap \mathcal{L}_\infty(a)\ |\ X\in \tau(a^\alpha)\}$, $\tau(a^\alpha)$ is the topology on $L_\infty(a^\alpha)$ of the norm $\|\cdot\|_{a^\alpha}$.
\end{definition}

The latter definition essentially means, that $\tau(a)$ is the initial topology on the $\mathcal{L}_\infty(a)$ for the family of mapping $$\varphi_\alpha: x\in\mathcal{L}_\infty(a) \mapsto x\in(L_\infty(a^\alpha),\|\cdot\|_{a^\alpha}).$$

Also, we can describe this topology as the topology on $\mathcal{L}_\infty(a)$ defined by the family of the seminorms $\{\|\cdot\|_{a^\alpha}\}_{\alpha>1}$. The familiy of the seminorms $\{\|\cdot\|_{a^n}\}_{n\in \mathbb{N}}$ describes the same topology, thus we get the following theorem.
\begin{theorem}
For the bounded $a\in \mathcal{M}^+$ the space $(\mathcal{L}_\infty(a), \tau(a))$ is metriziable locally-convex space (Frechet space).
\end{theorem}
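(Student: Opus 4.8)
The plan is to recognize $(\mathcal{L}_\infty(a), \tau(a))$ as the reduced projective limit of the Banach spaces $(L_\infty(a^n), \|\cdot\|_{a^n})$ and to verify the three defining properties of a Fréchet space in turn: local convexity, metrizability, and completeness. Local convexity is immediate, since $\tau(a)$ is by construction the topology generated by the family of seminorms $\{\|\cdot\|_{a^\alpha}\}_{\alpha>1}$, and every topology defined by a family of seminorms is locally convex.

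For metrizability I would first replace the uncountable generating family by the countable subfamily $\{\|\cdot\|_{a^n}\}_{n\in\mathbb{N},\, n>1}$, which is exactly the remark preceding the theorem. This reduction rests on the domination lemma $\|\cdot\|_{a^\alpha}\prec\|\cdot\|_{a^\beta}$ for $\alpha<\beta$: given any $\alpha>1$, choosing an integer $n\geq\alpha$ (so that also $n>1$) yields a constant $C$ with $\|x\|_{a^\alpha}\leq C\,\|x\|_{a^n}$ for all $x\in\mathcal{L}_\infty(a)$, so each seminorm of the full family is dominated by one of the countable family and the two families generate the same topology. Combined with the Hausdorff property — which holds because each $(L_\infty(a^\alpha),\|\cdot\|_{a^\alpha})$ is a normed space, so that $\|x\|_{a^\alpha}=0$ forces $x=0$ — a countable separating family of seminorms produces a metrizable topology, realised for instance by the metric $d(x,y)=\sum_{n>1}2^{-n}\frac{\|x-y\|_{a^n}}{1+\|x-y\|_{a^n}}$.

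The main work, and the step I expect to be the chief obstacle, is completeness. I would take a sequence $(x_k)$ that is Cauchy for $d$, equivalently Cauchy with respect to each seminorm $\|\cdot\|_{a^n}$. Since every $(L_\infty(a^n),\|\cdot\|_{a^n})$ is a Banach space — being isometrically isomorphic to $\mathcal{M}$ via the map $u$ of the Preliminaries, hence complete — for each fixed $n$ the sequence $(x_k)$ converges to some $y_n\in L_\infty(a^n)$. The crux is to show that these limits are consistent, i.e. that they all represent one and the same sesquilinear form. For $m>n$ the inclusion $L_\infty(a^m)\hookrightarrow L_\infty(a^n)$ is continuous by the domination lemma, so $x_k\to y_m$ in $\|\cdot\|_{a^m}$ implies $x_k\to y_m$ also in $\|\cdot\|_{a^n}$; by uniqueness of limits in the Hausdorff space $(L_\infty(a^n),\|\cdot\|_{a^n})$ this gives $y_m=y_n$ as elements of $L_\infty(a^n)$. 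Here boundedness of $a$ is convenient: all domains $D(a^{\alpha/2})$ equal $H$, so the forms $y_n$ live on the common domain $H\times H$ and the identifications are literal equalities rather than merely compatible inclusions.

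Finally I would put $x:=y_2$ and observe that $x=y_n\in L_\infty(a^n)$ for every $n>1$, whence $x\in\bigcap_{\alpha>1}L_\infty(a^\alpha)=\mathcal{L}_\infty(a)$, while $\|x_k-x\|_{a^n}=\|x_k-y_n\|_{a^n}\to 0$ for each $n$ shows $x_k\to x$ in $\tau(a)$. Thus every Cauchy sequence converges, and the space, being complete, metrizable and locally convex, is a Fréchet space. The only genuinely delicate point is the consistency of the per-level limits, which is exactly where continuity of the linking inclusions and the common domain of the forms enter.
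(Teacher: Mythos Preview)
Your proposal is correct and follows the same route the paper sketches: reduce the generating family of seminorms to the countable subfamily $\{\|\cdot\|_{a^n}\}$ via the domination lemma $\|\cdot\|_{a^\alpha}\prec\|\cdot\|_{a^\beta}$, which immediately yields a metrizable locally convex space. The paper itself stops there and defers the details (in particular completeness) to \cite{NovEsk2017}, whereas you supply the standard projective-limit completeness argument explicitly; your treatment is thus more self-contained than the paper's but not different in substance.
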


For the more detailed proof of the latter theorem see \cite{NovEsk2017}[Lemma 2].

\subsection{Case of unbounded operator with bounded inverse}
Now, consider case, when $a\eta \mathcal{M}, a\geq \mathbf{0}$, a is not bounded, but $a^{-1}$ is bounded.

\begin{lemma}
For a $a\eta \mathcal{M}$, $a\geq \mathbf{0}$ and $\alpha, \beta\in \mathbb{R}^+$. If $\alpha<\beta$, then 

$$L_\infty(a^\alpha)\subset L_\infty(a^\beta)$$
\end{lemma}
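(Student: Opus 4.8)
The plan is to mirror the argument used in the bounded case (the Lemma giving $L_\infty(a^\alpha)\supset L_\infty(a^\beta)$), but to exploit the boundedness of $a^{-1}$ in place of the boundedness of $a$; this interchange is exactly what reverses the direction of the inclusion. First I would record that, since $a^{-1}$ is bounded, $a\geq c\mathbf{1}$ for some $c>0$, so $a$ — and hence every power $a^\gamma$, $\gamma>0$ — is injective. Consequently the representation established in the Preliminaries applies to each $a^\gamma$: the map $u$ identifies $\mathcal{M}$ isometrically with $L_\infty(a^\gamma)\cong\mathcal{S}_{a^\gamma}(\mathcal{M})$, so every element of $L_\infty(a^\alpha)$ is of the form $\widehat{a^{\alpha/2}x a^{\alpha/2}}$ with $x\in\mathcal{M}$.

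So I would take $y\in L_\infty(a^\alpha)$ and write $y=\widehat{a^{\alpha/2}x a^{\alpha/2}}$ with $x\in\mathcal{M}$, the goal being to re-express $y$ as $\widehat{a^{\beta/2}x' a^{\beta/2}}$ for a suitable $x'\in\mathcal{M}$, which places $y$ in $L_\infty(a^\beta)$. The algebraic identity driving this is $a^{\alpha/2}=a^{-(\beta-\alpha)/2}a^{\beta/2}$, valid on $D(a^{\beta/2})$; note that $D(a^{\beta/2})\subset D(a^{\alpha/2})$ since $\beta>\alpha$ and $a$ is bounded below, so it is legitimate to restrict the form defining $y$ to pairs $(f,g)\in D(a^{\beta/2})\times D(a^{\beta/2})$. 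On such pairs,
\begin{align*}
\widehat{a^{\alpha/2}x a^{\alpha/2}}(f,g)
&= \langle x\, a^{-(\beta-\alpha)/2} a^{\beta/2} f,\; a^{-(\beta-\alpha)/2} a^{\beta/2} g\rangle \\
&= \langle a^{-(\beta-\alpha)/2}\, x\, a^{-(\beta-\alpha)/2} a^{\beta/2} f,\; a^{\beta/2} g\rangle,
\end{align*}
where I use that $a^{-(\beta-\alpha)/2}$ is a bounded self-adjoint operator to move it onto the first argument. Setting $x':=a^{-(\beta-\alpha)/2}x a^{-(\beta-\alpha)/2}$, the last expression is precisely $\widehat{a^{\beta/2}x' a^{\beta/2}}(f,g)$, so $y=\widehat{a^{\beta/2}x' a^{\beta/2}}\in L_\infty(a^\beta)$.

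The one genuine point to verify — and the place where the hypothesis is really used — is that $x'\in\mathcal{M}$: this holds precisely because $a^{-1}$ is bounded, so that $a^{-(\beta-\alpha)/2}$ is a bounded element of $\mathcal{M}$ (by the functional calculus, as $(\beta-\alpha)/2>0$), whence $x'$, a product of elements of $\mathcal{M}$, again lies in $\mathcal{M}$. This is the exact mirror of the bounded case, where it was $a^{(\beta-\alpha)/2}$ that stayed bounded; which of the two factors remains bounded is what flips the inclusion from $\supset$ to $\subset$. I expect the only mild technical care to be in justifying the restriction of the forms to the smaller domain $D(a^{\beta/2})$ and the functional-calculus identity $a^{-(\beta-\alpha)/2}a^{\beta/2}=a^{\alpha/2}$ there; the rest is routine.
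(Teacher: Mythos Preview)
Your argument is correct and follows essentially the same route as the paper: write $y=\widehat{a^{\alpha/2}xa^{\alpha/2}}$, factor $a^{\alpha/2}=a^{-(\beta-\alpha)/2}a^{\beta/2}$, and absorb the bounded factor $a^{-(\beta-\alpha)/2}$ into $x$ to obtain $x'=(a^{-1})^{(\beta-\alpha)/2}x(a^{-1})^{(\beta-\alpha)/2}\in\mathcal{M}$. If anything, you are more careful than the paper about the domain restriction to $D(a^{\beta/2})$ and the reason $x'\in\mathcal{M}$.
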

\begin{proof}
    Let $y\in L_\infty(a^\alpha)$, then we consider $y=\widehat{a^{\alpha/2}x a^{\alpha/2}},$ where $x\in \mathcal{M}$. Thus,
    $$y(f,g)=\widehat{a^{\alpha/2}x a^{\alpha/2}}(f,g)=
    \langle x a^{\alpha/2} f, a^{\alpha/2} g\rangle =$$
    $$=\langle (a^{(\alpha-\beta)/2} x a^{(\alpha-\beta)/2}) a^{\beta/2} f, a^{\beta/2}  g\rangle \text{ for any } f,g\in D(a^\frac{1}{2}).$$
Note, that $x':=(a^{-1})^{(\beta-\alpha)/2} x (a^{-1})^{(\beta-\alpha)/2}\in\mathcal{M}$, therefore we have that $y=\widehat{a^{\beta/2}x' a^{\beta/2}}\in L_\infty(a^\beta)$.
\end{proof}

\begin{lemma}
For $a\eta \mathcal{M}$, $a\geq \mathbf{0}$ such that $a^{-1}$ is bounded and $\alpha, \beta\in \mathbb{R}^+$, such that $\alpha<\beta$ we have
$$\|\cdot\|_{a^\alpha}\succ \|\cdot \|_{a^\beta}.$$
\end{lemma}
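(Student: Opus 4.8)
The plan is to mirror the argument used for the bounded case (the Lemma establishing $\|\cdot\|_{a^\alpha}\prec\|\cdot\|_{a^\beta}$), but with the roles of $a$ and $a^{-1}$ interchanged, since here the inclusion runs the other way, namely $L_\infty(a^\alpha)\subset L_\infty(a^\beta)$ by the preceding Lemma, and the claim $\|\cdot\|_{a^\alpha}\succ\|\cdot\|_{a^\beta}$ unwinds, via the Definition, to the inequality $\|y\|_{a^\beta}\leq C\|y\|_{a^\alpha}$ for every $y\in L_\infty(a^\alpha)$. Since $a^{-1}$ is bounded, $a$ is injective, hence so is every power $a^\gamma$, and the isometric isomorphism $u$ recalled in the Preliminaries applies to each of them.

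First I would take $y\in L_\infty(a^\alpha)$ and use the isometry $u$ for $a^\alpha$ to write $y=\widehat{a^{\alpha/2}x a^{\alpha/2}}$ with $x\in\mathcal{M}$ and $\|y\|_{a^\alpha}=\|x\|$. Next I would reuse the identity obtained in the proof of the preceding inclusion Lemma: the same form $y$ is represented over $a^\beta$ as $y=\widehat{a^{\beta/2}x' a^{\beta/2}}$, where $x':=(a^{-1})^{(\beta-\alpha)/2}x(a^{-1})^{(\beta-\alpha)/2}\in\mathcal{M}$. Applying the isometry $u$ for $a^\beta$ then yields $\|y\|_{a^\beta}=\|x'\|$.

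The final step is the norm estimate. I would bound $\|x'\|=\|(a^{-1})^{(\beta-\alpha)/2}x(a^{-1})^{(\beta-\alpha)/2}\|\leq\|(a^{-1})^{(\beta-\alpha)/2}\|^2\|x\|=\|a^{-(\beta-\alpha)}\|\,\|x\|$, so that $\|y\|_{a^\beta}\leq\|a^{-(\beta-\alpha)}\|\,\|y\|_{a^\alpha}$. This exhibits the required constant $C=\|a^{-(\beta-\alpha)}\|$ and hence $\|\cdot\|_{a^\alpha}\succ\|\cdot\|_{a^\beta}$.

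The step I expect to be the main point, rather than a genuine obstacle, is ensuring the constant $C$ is finite: this is exactly where the hypothesis that $a^{-1}$ is bounded is essential, since it guarantees $a^{-(\beta-\alpha)}=(a^{-1})^{\beta-\alpha}\in\mathcal{M}$ with finite norm. In the bounded-$a$ case the analogous constant involved the positive power $\|a^{\beta-\alpha}\|$, whereas here negative powers of $a$ appear and would be unbounded without this assumption. A secondary point to state carefully is the direction of the relation symbol: by the Definition, $\|\cdot\|_{a^\alpha}\succ\|\cdot\|_{a^\beta}$ means $\|\cdot\|_{a^\beta}\prec\|\cdot\|_{a^\alpha}$, which is consistent with the inclusion $L_\infty(a^\alpha)\subset L_\infty(a^\beta)$ supplied by the previous Lemma.
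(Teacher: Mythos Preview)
Your proposal is correct and follows essentially the same approach as the paper: represent $y\in L_\infty(a^\alpha)$ as $\widehat{a^{\alpha/2}xa^{\alpha/2}}$, rewrite it over $a^\beta$ with $x'=(a^{-1})^{(\beta-\alpha)/2}x(a^{-1})^{(\beta-\alpha)/2}$, and bound $\|x'\|\le\|(a^{-1})^{(\beta-\alpha)/2}\|^2\|x\|$ to obtain the constant $C=\|(a^{-1})^{\beta-\alpha}\|$. Your write-up is in fact more careful than the paper's, which presents the same chain of inequalities without the surrounding commentary.
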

\begin{proof}
Let $y\in L_\infty(a^\alpha),$ then $y=\widehat{a^{\alpha/2}x a^{\alpha/2}}$, where $x\in \mathcal{M}$, then
$$\|y\|_{a^\beta} = \|\widehat{a^{\alpha/2}x a^{\alpha/2}}\|_{a^\beta} =\|a^{(\alpha-\beta)/2}x a^{(\alpha-\beta)/2}\|\leq$$
$$\leq \|(a^{-1})^{(\beta-\alpha)/2}\|^2\|x\|=$$
$$=\|(a^{-1})^{(\beta-\alpha)}\|\|x\|
=\|(a^{-1})^{(\beta-\alpha)}\|\|y\|_{a^\alpha}$$
\end{proof}

\begin{definition}
For the unbounded $a\in \mathcal{M}^+$ with bounded inverse $a^{-1}$ we define the topological space $(\mathcal{L}_\infty(a), \tau_\infty(a))$ as the limit space for $L_\infty(a^\alpha)$, where $\mathcal{L}_\infty(a):=\bigcup\limits_{\alpha>1}L_\infty(a^\alpha)$ and $\tau(a)$ is the strongest topology such that for all $\alpha>0$ the mappings $m_\alpha: x\in  L_\infty(a^\alpha) \mapsto x \in (\mathcal{L}_\infty(a), \tau_\infty(a))$ are continuous.
\end{definition}

Evidently, $(\mathcal{L}_\infty(a),\tau_\infty(a))$ is an (LB)-space.

\subsection{Case of unbounded operator with unbounded inverse}

Now, consider case, when $a\eta \mathcal{M}$, $a\geq \mathbf{0}$, $a$ and $a^{-1}$ are unbounded, simultaneously. Then we take spectra gecomposition $$a=\int_{0}^{+\infty} \lambda d P_\lambda$$
and determine $$a_0:=\int_{0}^1 \lambda d P_\lambda, \ \ \ a_\infty:=\int_{1}^{+\infty} \lambda d P_\lambda;\ \  p_0:=\int_{0}^1 d P_\lambda, \ \ \ p_\infty:=\int_{1}^{+\infty} d P_\lambda.$$
Note, that $$a_0=ap_0=p_0a=p_0ap_0,$$ $$a_\infty=p_\infty a_\infty= a_\infty p_\infty= p_\infty a_\infty p_\infty,$$ $$p_\infty p_0 = p_0p_\infty=\mathbf{0}=a_0 a_\infty=a_\infty a_0.$$

Evidently, $H=p_0H\oplus p_\infty H$, thus we represent $\mathcal{M}$ as a subalgebra of the algebra of matrices $\mathbb{M}_2(\mathcal{M})$:
$$x=\begin{pmatrix} x_{p_0} & p_0 xp_\infty \\ p_\infty xp_0 & x_{p_\infty} \end{pmatrix}\in \begin{pmatrix}\mathcal{M}_{p_0} & p_0\mathcal{M}p_\infty \\ p_\infty\mathcal{M}p_0 & \mathcal{M}_{p_\infty} \end{pmatrix};$$
meaning that if $x$ acts on $h\in H$, then $$xh=x(p_0 h\oplus p_\infty h)=$$
$$=\begin{pmatrix} x_{p_0} & p_0 xp_\infty \\ p_\infty xp_0 & x_{p_\infty} \end{pmatrix}\begin{pmatrix}p_0 h \\ p_\infty h\end{pmatrix}=\begin{pmatrix}p_0 x p_0 h +p_0 x p_\infty h \\ p_\infty x p_0 h + p_\infty x p_\infty h\end{pmatrix}=\begin{pmatrix}p_0 x h \\ p_\infty x h\end{pmatrix}=$$
$$=p_0(xh)\oplus p_\infty(xh).$$

Evidently, $a^\frac{1}{2}\mathcal{M}a^\frac{1}{2}$ in this notation is represented as

$$\begin{pmatrix}L_\infty(a_0) & \mathcal{S}_{a_0^{1/2},a_\infty^{1/2}}(\mathcal{M}) \\
\mathcal{S}_{a_\infty^{1/2},a_0^{1/2}}(\mathcal{M}) &  L_\infty(a_\infty)&\end{pmatrix},$$
where $\mathcal{S}_{a_0^{1/2},a_\infty^{1/2}}(\mathcal{M})$ and $\mathcal{S}_{a_\infty^{1/2}, a_0^{1/2}}(\mathcal{M})$ are defined by the following definitions

\begin{definition}
Let $a,b \eta \mathcal{M}$, $x\in \mathcal{M}$, then the sesquilinear form $$\widehat{axb}: (f,g)\in D(b)\times D(a) \mapsto \mathbb{R}$$ is defined by the equality
$$\widehat{axb}(f,g):=\langle x b f, a g\rangle.$$
\end{definition}
\begin{definition} Let $a,b\eta \mathcal{M}$. We define the linear space of sesqulinear forms
$$\mathcal{S}_{a, b}(\mathcal{M}):=\{\widehat{axb}\ |\ x\in \mathcal{M}\}$$
endowed with the norm $\|\widehat{axb}\|_{a,b}:=\|x\|.$
\end{definition}

Let $a$ be injective, then $a_0|_{p_0 H}$ is injective bounded operator in $\mathcal{M}_{p_0}$ acting on $p_0H$ and $a_\infty|_{p_\infty H}$ is injective operator affiliated with $\mathcal{M}_{p_\infty}$ (acting on $p_\infty H$) with bounded inverse. Moreover, $L_\infty(a_0)$ is isometrically isomorphic to $L_\infty(a_0|_{p_0 H})$ as well as $L_\infty(a_\infty)$ is isometrically isomorphic to $L_\infty(a_\infty|_{p_\infty H})$ by construction. Also, it is evident, that there exists the isomorphism $\widehat{axb}\mapsto \overline{\widehat{bxa}}$ between the spaces $\mathcal{S}_{a,b}(\mathcal{M})$ and $\mathcal{S}_{b,a}(\mathcal{M})$. Thus, we only need to consider the limits for $\mathcal{S}_{a_0^\alpha, a_\infty^\beta}(\mathcal{M})$ for $\alpha,\beta \to +\infty.$

Further we also use the notation $$\mathcal{S}_{0,\infty}^{\alpha, \beta}:=\mathcal{S}_{a_0^\alpha, a_\infty^\beta}(\mathcal{M})$$ and $$\mathcal{S}_{\infty, 0}^{\alpha, \beta}:=\mathcal{S}_{a_\infty^\alpha, a_0^\beta}(\mathcal{M}).$$

Let $\alpha_1<\alpha_2$ and $\beta_1<\beta_2$, $\alpha_i, \beta_i\in \mathbb{R}$, then note, that the following schemes 

$$
\begin{matrix}
\mathcal{S}_{0,\infty}^{\alpha_1, \beta_1} & \subset & \mathcal{S}_{0,\infty}^{\alpha_1, \beta_2}\\
\cup & \ & \cup\\
\mathcal{S}_{0,\infty}^{\alpha_2, \beta_1} & \subset & \mathcal{S}_{0,\infty}^{\alpha_2, \beta_2}\\
\end{matrix} \text{ and }
\begin{matrix}
\mathcal{S}_{\infty, 0}^{\alpha_1, \beta_1} & \supset & \mathcal{S}_{\infty, 0}^{\alpha_1, \beta_2}\\
\cap & \ & \cap\\
\mathcal{S}_{\infty, 0}^{\alpha_2, \beta_1} & \supset & \mathcal{S}_{\infty. 0}^{\alpha_2, \beta_2}\\
\end{matrix} \text{ hold.}$$
As for the norms,
$$
\begin{matrix}
\|\cdot\|_{a_0^{\alpha_1}, a_\infty^{\beta_1}}& \succ & \|\cdot\|_{a_0^{\alpha_1}, a_\infty^{\beta_2}}\\
\curlywedge & \ & \curlywedge \\
\|\cdot\|_{a_0^{\alpha_2}, a_\infty^{\beta_1}}& \succ & \|\cdot\|_{a_0^{\alpha_2}, a_\infty^{\beta_2}}\\
\end{matrix} \text{ and }
\begin{matrix}
\|\cdot\|_{a_\infty^{\alpha_1}, a_0^{\beta_1}}& \prec & \|\cdot\|_{a_\infty^{\alpha_1}, a_0^{\beta_2}}\\
\curlyvee & \ & \curlyvee \\
\|\cdot\|_{a_\infty^{\alpha_2}, a_0^{\beta_1}}& \prec & \|\cdot\|_{a_\infty^{\alpha_2}, a_0^{\beta_2}}\\
\end{matrix}.$$
By $\tau_{0,\infty}^{\alpha,\beta}$ we denote the topology of the norm $\|\cdot\|_{a_0^\alpha, a_0\infty^\beta}$ on the space $\mathcal{S}_{0,\infty}^{\alpha,\beta}.$ Note, that
$$\begin{matrix}
\tau_{0,\infty}^{\alpha_1,\beta_1}|_{\alpha_2} & \supset & \tau_{0,\infty}^{\alpha_1,\beta_2}|_{\alpha_2,\beta_1} \\
\cap & \ & \cap \\
\tau_{0,\infty}^{\alpha_2,\beta_1} & \supset & \tau_{0,\infty}^{\alpha_2,\beta_2}|_{\beta_1} \\\end{matrix},$$
where $$\tau|_{\alpha_2}\equiv \tau|_{\beta_1}\equiv \tau|_{\alpha_2,\beta_1}:=\{X\cap \mathcal{S}_{0,\infty}^{\alpha_1,\beta_2}\ |\ X\in \tau\}.$$

Sine there exists the isomorphism between $\mathcal{S}_{0,\infty}^{\alpha, \beta}$ and $\mathcal{S}_{\infty,0}^{\alpha, \beta}$, we will only consider one case.
Consider the limits $$\mathfrak{S}^{\alpha}_0=\bigcup_{\beta>0} \mathcal{S}_{0,\infty}^{\alpha,\beta} \text{ and } \mathfrak{S}^{\beta}_\infty=\bigcap_{\alpha>0} \mathcal{S}_{0,\infty}^{\alpha,\beta}$$
with the topologies $\tau_0^\alpha$ and $\tau_\infty^\beta$, respectively. Topology $\tau_0^\alpha$ is the strongest topology on $\mathfrak{S}_0^\alpha$, such that all the mappings 
\begin{equation}
\varphi_\beta^\alpha: x\in (\mathcal{S}_{0,\infty}^{\alpha, \beta},\|\cdot\|_{a_0^\alpha, a_\infty^\beta}) \mapsto x\in \mathfrak{S}_0^\alpha
\end{equation}
are continuous and the topology $\tau_\infty^\beta$ is the weakest topology on $\mathfrak{S}_\infty^\beta$ that all the mappings \begin{equation}
\psi_\alpha^\beta: x\in \mathfrak{S}_\infty^\beta \mapsto x\in (\mathcal{S}_{0,\infty}^{\alpha,\beta}, \|\cdot\|_{a_0^\alpha, a_\infty^\beta})
\end{equation}
are continuous.

{\bf Remark}
Note, that any $(\mathcal{S}_{0,\infty}^{\alpha,\beta}, \|\cdot\|_{a_0^\alpha, a_\infty^\beta})$ is Banach space, thus $(\mathfrak{S}_0^\alpha, \tau_0^\alpha)$ is an $(LB)$-space.

{\bf Remark}
Note, that $(\mathfrak{S}_\infty^\alpha, \tau_\infty^\alpha)$ is a Frechet space, since its topology is determined by the countable set of the seminorms.

\begin{lemma}
Let $\alpha<\beta$, $\alpha,\beta\in \mathbb{R}^+$, then 
\begin{enumerate}[(i)]
    \item $\mathfrak{S}_0^\alpha\supset \mathfrak{S}_0^\beta$;
    \item $\mathfrak{S}_\infty^\alpha\subset \mathfrak{S}_\infty^\beta.$
\end{enumerate}
\end{lemma}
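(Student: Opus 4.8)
The plan is to deduce both inclusions from the one-parameter (fixed-index) inclusions already recorded in the first monotonicity scheme, and then simply to pass to the union, respectively the intersection, by elementary set theory. Throughout write $\mathcal{S}^{\gamma,\delta}:=\mathcal{S}_{0,\infty}^{\gamma,\delta}$, so that $\mathfrak{S}_0^\alpha=\bigcup_{\gamma>0}\mathcal{S}^{\alpha,\gamma}$ and $\mathfrak{S}_\infty^\beta=\bigcap_{\gamma>0}\mathcal{S}^{\gamma,\beta}$.

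First I would isolate the two needed one-parameter facts. Increasing the $a_0$-exponent (the first index) with the second index $\gamma$ fixed shrinks the space: for $\alpha<\beta$ one has $\mathcal{S}^{\beta,\gamma}\subset\mathcal{S}^{\alpha,\gamma}$. These are the vertical ($\cup$) inclusions of the first scheme, and one proves them exactly as in the bounded case: given $\widehat{a_0^\beta x a_\infty^\gamma}$ with $x\in\mathcal{M}$, factor $a_0^\beta=a_0^{\beta-\alpha}a_0^\alpha$ and use boundedness of $a_0^{\beta-\alpha}$ to rewrite the form as $\widehat{a_0^\alpha x' a_\infty^\gamma}$ with $x':=a_0^{\beta-\alpha}x\in\mathcal{M}$. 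Dually, increasing the $a_\infty$-exponent (the second index) with the first index $\gamma$ fixed enlarges the space: for $\alpha<\beta$ one has $\mathcal{S}^{\gamma,\alpha}\subset\mathcal{S}^{\gamma,\beta}$. These are the horizontal ($\subset$) inclusions of the first scheme, proved as in the bounded-inverse case by factoring $a_\infty^\alpha=(a_\infty^{-1})^{\beta-\alpha}a_\infty^\beta$ and setting $x':=x(a_\infty^{-1})^{\beta-\alpha}\in\mathcal{M}$.

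Granting these, both claims are immediate. For (i), for every $\gamma>0$ the first fact gives $\mathcal{S}^{\beta,\gamma}\subset\mathcal{S}^{\alpha,\gamma}\subset\mathfrak{S}_0^\alpha$; taking the union over $\gamma$ yields $\mathfrak{S}_0^\beta=\bigcup_{\gamma>0}\mathcal{S}^{\beta,\gamma}\subset\mathfrak{S}_0^\alpha$, i.e.\ $\mathfrak{S}_0^\alpha\supset\mathfrak{S}_0^\beta$. For (ii), for every $\gamma>0$ the second fact gives $\mathcal{S}^{\gamma,\alpha}\subset\mathcal{S}^{\gamma,\beta}$; since index-wise inclusions $A_\gamma\subset B_\gamma$ force $\bigcap_\gamma A_\gamma\subset\bigcap_\gamma B_\gamma$, we obtain $\mathfrak{S}_\infty^\alpha=\bigcap_{\gamma>0}\mathcal{S}^{\gamma,\alpha}\subset\bigcap_{\gamma>0}\mathcal{S}^{\gamma,\beta}=\mathfrak{S}_\infty^\beta$.

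The only substantive point, and thus the main obstacle, is the verification of the two one-parameter inclusions — concretely, checking that $x'=a_0^{\beta-\alpha}x$ and $x'=x(a_\infty^{-1})^{\beta-\alpha}$ again belong to $\mathcal{M}$ and that the rewritten identities hold on the correct domains $D(a_\infty^\gamma)\times D(a_0^\alpha)$. This is exactly where boundedness of $a_0$ and of $a_\infty^{-1}$ enters: $a_0^{\beta-\alpha}$ and $(a_\infty^{-1})^{\beta-\alpha}$ are bounded operators in $\mathcal{M}$, so the products remain in $\mathcal{M}$. Once these one-parameter inclusions are secured, the passage to unions and intersections carries no further difficulty.
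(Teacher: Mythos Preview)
Your proof is correct and follows essentially the same route as the paper: both deduce (i) and (ii) from the one-parameter inclusions $\mathcal{S}_{0,\infty}^{\beta,\gamma}\subset\mathcal{S}_{0,\infty}^{\alpha,\gamma}$ and $\mathcal{S}_{0,\infty}^{\gamma,\alpha}\subset\mathcal{S}_{0,\infty}^{\gamma,\beta}$ (already recorded in the monotonicity scheme) and then pass to the union, respectively intersection, over $\gamma$. The only difference is that you additionally sketch a direct verification of those one-parameter inclusions via the factorizations $x'=a_0^{\beta-\alpha}x$ and $x'=x(a_\infty^{-1})^{\beta-\alpha}$, whereas the paper simply quotes them from the scheme established earlier.
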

\begin{proof}
\noindent$(i)$ Since $$\mathfrak{S}_0^\beta=\bigcup_{\gamma>0}\mathcal{S}_{0,\infty}^{\beta,\gamma},
\mathfrak{S}_0^\alpha=\bigcup_{\gamma>0}\mathcal{S}_{0,\infty}^{\alpha,\gamma} \text{ and } \mathcal{S}_{0,\infty}^{\beta, \gamma}\subset \mathcal{S}_{0,\infty}^{\alpha, \gamma},$$ it follows that
if $x\in \mathfrak{S}_0^\beta$, then there exists $\gamma_0$ such that $x\in\mathcal{S}_{0,\infty}^{\beta,\gamma_0}\subset \mathcal{S}_{0,\infty}^{\alpha,\gamma_0}$, thus
$x\in \mathfrak{S}_0^\alpha.$

\noindent$(ii)$
If $x\in \mathfrak{S}_\infty^\alpha$, then for all $\gamma>0$ $x\in \mathcal{S}_{0,\infty}^{\gamma,\alpha}\subset\mathcal{S}_{0,\infty}^{\gamma, \beta},$
thus $$x\in\bigcap\limits_{\gamma>0}\mathcal{S}_{0,\infty}^{\gamma,\beta}=\mathfrak{S}_\infty^\beta.$$
\end{proof}

We consider the following constructions: $$\overline{\mathfrak{L}_{0,\infty}}=\bigcap_{\alpha>0}\mathfrak{S}_0^\alpha, \underline{\mathfrak{L}_{0,\infty}}=\bigcup_{\beta>0}\mathfrak{S}_\infty^\beta.$$
Note that $$\underline{\mathfrak{L}_{0,\infty}}=\bigcup_{\beta>0}\mathfrak{S}_\infty^\beta=\bigcup_{\beta>0}\bigcap_{\alpha>0}\mathcal{S}_{0,\infty}^{\alpha,\beta}\subset\bigcap_{\alpha>0}\bigcup_{\beta>0}\mathcal{S}_{0,\infty}^{\alpha,\beta}=\bigcap_{\alpha>0}\mathfrak{S}_0^\alpha=\overline{\mathfrak{L}_{0,\infty}}.$$

\begin{proposition}

$$\overline{\mathfrak{L}_{0,\infty}}\supset \overline{\mathrm{lim}}\mathcal{S}_{0,\infty}^{\alpha,\alpha}\equiv\bigcap_{\alpha>0}\bigcup_{\beta\geq\alpha}\mathcal{S}_{0,\infty}^{\beta,\beta}\supset\bigcup_{\alpha>0}\bigcap_{\beta\geq\alpha}\mathcal{S}_{0,\infty}^{\beta,\beta}\equiv\underline{\mathrm{lim}}\mathcal{S}_{0,\infty}^{\alpha,\alpha}\supset\underline{\mathfrak{L}_{0,\infty}}.$$
\end{proposition}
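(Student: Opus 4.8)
The plan is to prove the displayed chain of three inclusions one at a time, each by a direct element chase. The only analytic input required is the monotonicity of the two-parameter family already recorded in the inclusion schemes above, namely that $\mathcal{S}_{0,\infty}^{\alpha,\beta}$ \emph{decreases} when $\alpha$ grows (second index fixed) and \emph{increases} when $\beta$ grows (first index fixed). Granting that, everything is set-theoretic bookkeeping of $\liminf$/$\limsup$ type; I do not expect a genuine obstacle, and the only care needed is to invoke the correct monotonicity when passing between the diagonal indices $(\gamma,\gamma)$ and the off-diagonal ones.

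For the leftmost inclusion $\overline{\mathfrak{L}_{0,\infty}}\supset\overline{\mathrm{lim}}\,\mathcal{S}_{0,\infty}^{\alpha,\alpha}$ I would take $x\in\bigcap_{\gamma>0}\bigcup_{\beta\geq\gamma}\mathcal{S}_{0,\infty}^{\beta,\beta}$ and fix an arbitrary $\alpha>0$. Specializing the outer intersection to $\gamma=\alpha$ yields some $\beta_0\geq\alpha$ with $x\in\mathcal{S}_{0,\infty}^{\beta_0,\beta_0}$. Since the family decreases in its first index and $\beta_0\geq\alpha$, we have $\mathcal{S}_{0,\infty}^{\beta_0,\beta_0}\subset\mathcal{S}_{0,\infty}^{\alpha,\beta_0}$, whence $x\in\bigcup_{\beta>0}\mathcal{S}_{0,\infty}^{\alpha,\beta}=\mathfrak{S}_0^\alpha$. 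As $\alpha$ was arbitrary, $x\in\bigcap_{\alpha>0}\mathfrak{S}_0^\alpha=\overline{\mathfrak{L}_{0,\infty}}$.

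The middle inclusion $\overline{\mathrm{lim}}\,\mathcal{S}_{0,\infty}^{\alpha,\alpha}\supset\underline{\mathrm{lim}}\,\mathcal{S}_{0,\infty}^{\alpha,\alpha}$ is the familiar $\liminf\subset\limsup$ fact and uses no monotonicity at all: given $x\in\bigcup_{\alpha>0}\bigcap_{\beta\geq\alpha}\mathcal{S}_{0,\infty}^{\beta,\beta}$, choose $\alpha_0$ with $x\in\mathcal{S}_{0,\infty}^{\beta,\beta}$ for every $\beta\geq\alpha_0$; then for any prescribed $\alpha>0$ the value $\beta=\max(\alpha,\alpha_0)$ satisfies $\beta\geq\alpha$ and $\beta\geq\alpha_0$, exhibiting $x\in\bigcup_{\beta\geq\alpha}\mathcal{S}_{0,\infty}^{\beta,\beta}$, so $x$ lies in the intersection over all $\alpha$.

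The rightmost inclusion $\underline{\mathrm{lim}}\,\mathcal{S}_{0,\infty}^{\alpha,\alpha}\supset\underline{\mathfrak{L}_{0,\infty}}$ is the step demanding the most care, since here the monotonicity in the \emph{second} index is essential. I would start from $x\in\bigcup_{\beta>0}\bigcap_{\alpha>0}\mathcal{S}_{0,\infty}^{\alpha,\beta}$, so there is a fixed $\beta_0>0$ with $x\in\mathcal{S}_{0,\infty}^{\alpha,\beta_0}$ for every $\alpha>0$. For any $\gamma\geq\beta_0$, increasing the second index from $\beta_0$ to $\gamma$ gives $\mathcal{S}_{0,\infty}^{\gamma,\beta_0}\subset\mathcal{S}_{0,\infty}^{\gamma,\gamma}$, and taking $\alpha=\gamma$ in the hypothesis places $x\in\mathcal{S}_{0,\infty}^{\gamma,\beta_0}\subset\mathcal{S}_{0,\infty}^{\gamma,\gamma}$. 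Hence $x\in\bigcap_{\gamma\geq\beta_0}\mathcal{S}_{0,\infty}^{\gamma,\gamma}\subset\bigcup_{\alpha>0}\bigcap_{\gamma\geq\alpha}\mathcal{S}_{0,\infty}^{\gamma,\gamma}=\underline{\mathrm{lim}}\,\mathcal{S}_{0,\infty}^{\alpha,\alpha}$, which closes the chain.
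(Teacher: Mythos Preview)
Your proof is correct and follows essentially the same approach as the paper's: both arguments reduce each outer inclusion to the appropriate one-variable monotonicity of $\mathcal{S}_{0,\infty}^{\alpha,\beta}$ (decreasing in the first index for the left inclusion, increasing in the second index for the right one), with the middle inclusion being the generic $\liminf\subset\limsup$ fact. The only stylistic difference is that the paper manipulates the unions and intersections directly at the level of set identities, whereas you carry out the equivalent element chase.
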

\begin{proof}
Note, that if $\alpha_1<\alpha_2$, then $\mathcal{S}_{0,\infty}^{\alpha_1,\beta}\supset \mathcal{S}_{0,\infty}^{\alpha_2,\beta},$ thus

$$\overline{\mathfrak{L}_{0,\infty}}=\bigcap_{\alpha>0}\bigcup_{\beta>0}\mathcal{S}_{0,\infty}^{\alpha,\beta}=\bigcap_{\alpha>0}\bigcup_{\beta\geq\alpha}\mathcal{S}_{0,\infty}^{\alpha,\beta}\supset
\bigcap_{\alpha>0}\bigcup_{\beta\geq\alpha}\mathcal{S}_{0,\infty}^{\beta,\beta}$$

Note, that if $\beta_1<\beta_2$, then $\mathcal{S}_{0,\infty}^{\alpha,\beta_1}\subset \mathcal{S}_{0,\infty}^{\alpha,\beta_2},$ thus
$$\bigcup_{\alpha>0}\bigcap_{\beta\geq \alpha} \mathcal{S}_{0,\infty}^{\beta,\beta}\supset \bigcup_{\alpha>0}\bigcap_{\beta\geq\alpha}\mathcal{S}_{0,\infty}^{\beta,\alpha}=\bigcup_{\alpha>0}\bigcap_{\beta>0}\mathcal{S}_{0,\infty}^{\beta,\alpha}=\underline{\mathfrak{L}_{0,\infty}}.$$
\end{proof}

\begin{definition}
We call the family $\mathcal{S}_{0,\infty}^{\alpha,\beta}$ converging if $\underline{\mathfrak{L}_{0,\infty}}=\overline{\mathfrak{L}_{0,\infty}}$ and denote it as $\mathcal{S}_{0,\infty}^{\alpha, \beta} \xrightarrow{\alpha,\beta} \mathfrak{L}_{0,\infty}.$
\end{definition}

\begin{corollary}
If the family $\mathcal{S}_{0,\infty}^{\alpha,\beta}$ is converging, then $$\overline{\mathrm{lim}}\mathcal{S}_{0,\infty}^{\alpha,\alpha}=\underline{\mathrm{lim}}\mathcal{S}_{0,\infty}^{\alpha,\alpha}=\mathfrak{L}_{0,\infty}.$$
\end{corollary}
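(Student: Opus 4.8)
The plan is to obtain the result as an immediate consequence of the preceding Proposition combined with the convergence hypothesis, via a squeeze argument. First I would recall that the Proposition establishes the four-term chain of inclusions
$$\overline{\mathfrak{L}_{0,\infty}}\supset \overline{\mathrm{lim}}\,\mathcal{S}_{0,\infty}^{\alpha,\alpha}\supset\underline{\mathrm{lim}}\,\mathcal{S}_{0,\infty}^{\alpha,\alpha}\supset\underline{\mathfrak{L}_{0,\infty}}.$$
The key observation is that the two outermost sets in this chain are exactly $\overline{\mathfrak{L}_{0,\infty}}$ and $\underline{\mathfrak{L}_{0,\infty}}$, and that by the definition of a converging family the hypothesis $\underline{\mathfrak{L}_{0,\infty}}=\overline{\mathfrak{L}_{0,\infty}}$ identifies both of these with the single space $\mathfrak{L}_{0,\infty}$.

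Next I would invoke the elementary set-theoretic principle that a nested chain $A\supset B\supset C\supset D$ with $A=D$ forces $A=B=C=D$. Applying this with $A=\overline{\mathfrak{L}_{0,\infty}}$, $B=\overline{\mathrm{lim}}\,\mathcal{S}_{0,\infty}^{\alpha,\alpha}$, $C=\underline{\mathrm{lim}}\,\mathcal{S}_{0,\infty}^{\alpha,\alpha}$ and $D=\underline{\mathfrak{L}_{0,\infty}}$, and using $A=D=\mathfrak{L}_{0,\infty}$, collapses the entire chain and yields
$$\overline{\mathrm{lim}}\,\mathcal{S}_{0,\infty}^{\alpha,\alpha}=\underline{\mathrm{lim}}\,\mathcal{S}_{0,\infty}^{\alpha,\alpha}=\mathfrak{L}_{0,\infty},$$
which is precisely the asserted equality.

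There is essentially no genuine analytic obstacle here, since all the substantive content already resides in the Proposition; the corollary amounts to purely set-theoretic bookkeeping once the chain of inclusions is in hand. The only point I would take care to verify explicitly is that the convergence hypothesis genuinely pins both endpoints of the chain to one and the same object $\mathfrak{L}_{0,\infty}$, rather than merely asserting some coincidence among the inner terms; this is what guarantees that the squeeze identifies all four spaces simultaneously.
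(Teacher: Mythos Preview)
Your proposal is correct and matches the paper's approach: the paper states the corollary without proof, treating it as an immediate consequence of the preceding Proposition together with the definition of a converging family, which is exactly the squeeze argument you describe.
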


\begin{lemma}
Let $\alpha<\beta,$ $\alpha,\beta\in\mathbb{R}^+$ and $$\tau_0^\alpha|_\beta:=\{X\cap \mathfrak{S}_0^\beta\ |\ X\in \tau_0^\alpha\}$$ i.e. the topology induced by $\tau_0^\alpha$ on $\mathfrak{S}_0^\beta$, then $$\tau_0^\alpha|_\beta\subset \tau_0^\beta.$$
\end{lemma}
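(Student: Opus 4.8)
The plan is to recast the set-theoretic inclusion $\mathfrak{S}_0^\beta\subset\mathfrak{S}_0^\alpha$ (granted by part $(i)$ of the preceding Lemma, since $\alpha<\beta$) as a statement about continuity of a single map. Write $\iota$ for the inclusion $\iota:\mathfrak{S}_0^\beta\hookrightarrow\mathfrak{S}_0^\alpha$. The assertion $\tau_0^\alpha|_\beta\subset\tau_0^\beta$ is, upon unwinding the definition of the induced topology, exactly the statement that $\iota$ is continuous as a map $(\mathfrak{S}_0^\beta,\tau_0^\beta)\to(\mathfrak{S}_0^\alpha,\tau_0^\alpha)$: indeed, for $U\in\tau_0^\alpha$ one has $\iota^{-1}(U)=U\cap\mathfrak{S}_0^\beta$, so continuity of $\iota$ says precisely that every member of $\tau_0^\alpha|_\beta$ lies in $\tau_0^\beta$. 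Thus the whole lemma reduces to proving continuity of $\iota$.

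To prove $\iota$ continuous I would invoke the universal property of the final (inductive-limit) topology. By construction $\tau_0^\beta$ is the strongest topology on $\mathfrak{S}_0^\beta=\bigcup_{\gamma>0}\mathcal{S}_{0,\infty}^{\beta,\gamma}$ making every map $\varphi_\gamma^\beta$ of \eqref{phi} continuous; hence a map out of $(\mathfrak{S}_0^\beta,\tau_0^\beta)$ is continuous if and only if its composition with each $\varphi_\gamma^\beta$ is continuous. Therefore it suffices to show, for every $\gamma>0$, that the composite $\iota\circ\varphi_\gamma^\beta$ is continuous. Since both $\varphi_\gamma^\beta$ and $\iota$ are inclusions, this composite is simply the inclusion of the Banach space $(\mathcal{S}_{0,\infty}^{\beta,\gamma},\|\cdot\|_{a_0^\beta,a_\infty^\gamma})$ into $(\mathfrak{S}_0^\alpha,\tau_0^\alpha)$.

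Next I would factor this inclusion through $(\mathcal{S}_{0,\infty}^{\alpha,\gamma},\|\cdot\|_{a_0^\alpha,a_\infty^\gamma})$. Because $\alpha<\beta$ we have $\mathcal{S}_{0,\infty}^{\beta,\gamma}\subset\mathcal{S}_{0,\infty}^{\alpha,\gamma}$ (the vertical inclusions of the first scheme), and on the smaller space its own norm dominates the ambient one: the vertical norm relation $\|\cdot\|_{a_0^\alpha,a_\infty^\gamma}\prec\|\cdot\|_{a_0^\beta,a_\infty^\gamma}$ (the $\curlywedge$ of the norm scheme) provides a constant $C$ with $\|s\|_{a_0^\alpha,a_\infty^\gamma}\le C\,\|s\|_{a_0^\beta,a_\infty^\gamma}$ for all $s\in\mathcal{S}_{0,\infty}^{\beta,\gamma}$; concretely, rewriting $\widehat{a_0^\beta x a_\infty^\gamma}=\widehat{a_0^\alpha(a_0^{\beta-\alpha}x)a_\infty^\gamma}$ yields $C=\|a_0^{\beta-\alpha}\|\le 1$, as $a_0$ is a positive contraction. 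Hence the first leg $(\mathcal{S}_{0,\infty}^{\beta,\gamma},\|\cdot\|_{a_0^\beta,a_\infty^\gamma})\hookrightarrow(\mathcal{S}_{0,\infty}^{\alpha,\gamma},\|\cdot\|_{a_0^\alpha,a_\infty^\gamma})$ is bounded, hence continuous, while the second leg $\varphi_\gamma^\alpha:(\mathcal{S}_{0,\infty}^{\alpha,\gamma},\|\cdot\|_{a_0^\alpha,a_\infty^\gamma})\to(\mathfrak{S}_0^\alpha,\tau_0^\alpha)$ is continuous by the very definition of $\tau_0^\alpha$. Their composite is $\iota\circ\varphi_\gamma^\beta$, so $\iota\circ\varphi_\gamma^\beta$ is continuous for each $\gamma$, and the universal property then gives continuity of $\iota$, completing the argument.

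The proof is essentially soft, so the only real point to get right is the bookkeeping of the universal property: one must be careful that continuity of a map out of an inductive-limit topology is genuinely tested on the defining maps $\varphi_\gamma^\beta$, and that these maps together with $\iota$ compose to the correct inclusion into $(\mathfrak{S}_0^\alpha,\tau_0^\alpha)$. The single analytic ingredient, the norm domination of the first leg, is already recorded in the norm scheme preceding the lemma (and is immediate from $\|a_0^{\beta-\alpha}\|\le1$); beyond that there is no genuine obstacle.
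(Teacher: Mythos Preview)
Your proof is correct and uses the same analytic ingredient as the paper: the continuity of the Banach-space embedding $m_\gamma^{\beta,\alpha}:(\mathcal{S}_{0,\infty}^{\beta,\gamma},\|\cdot\|_{a_0^\beta,a_\infty^\gamma})\hookrightarrow(\mathcal{S}_{0,\infty}^{\alpha,\gamma},\|\cdot\|_{a_0^\alpha,a_\infty^\gamma})$, followed by the defining property of the final topology $\tau_0^\beta$. The only difference is packaging: where you invoke the universal property of the final topology directly (a map out of $(\mathfrak{S}_0^\beta,\tau_0^\beta)$ is continuous iff its composites with the $\varphi_\gamma^\beta$ are), the paper effectively reproves that property by hand, arguing by contradiction that if $X_0\notin\tau_0^\beta$ one could enlarge $\tau_0^\beta$ while keeping all $\varphi_\gamma^\beta$ continuous---your formulation is the cleaner of the two.
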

\begin{proof}
Let $X_0 \in \tau_0^\alpha|_\beta$, i.e. $X_0=X\cap\mathfrak{S}_0^\beta$ with $X\in \tau_0^\alpha$.
Then $(\varphi^\alpha_\gamma)^{-1}(X)$ is open in $(\mathcal{S}_{0,\infty}^{\alpha,\gamma}, \|\cdot\|_{a_0^\alpha, a_\infty^\gamma})$. Note, that the embedding 
$$m_\gamma^{\beta, \alpha}: x\in (\mathcal{S}_{0,\infty}^{\beta, \gamma},\|\cdot\|_{a_0^\beta,a_\infty^\gamma}) \mapsto x\in (\mathcal{S}_{0,\infty}^{\alpha, \gamma}, \|\cdot\|_{a_0^\alpha, a_\infty^\gamma})$$ is also continuous, thus
$$(\varphi_\gamma^\alpha m_\gamma^{\beta,\alpha})^{-1}(X)=(m_\gamma^{\beta, \alpha})^{-1}(\varphi_\gamma^\alpha)^{-1}(X)$$ is open in $(\mathcal{S}_{0,\infty}^{\beta, \gamma},\|\cdot\|_{a_0^\beta, a_\infty^\gamma})$ for any $\gamma>0.$

The topology $\tau_0^\beta$ is the strongest topology, such that all of the embeddings $\varphi_\gamma^\beta$ are continuous. If $X_0\notin \tau_0^\beta$, then there exists the toplogy
$$\tau=\tau_0^\beta\cup\{X_0\cap Y\ |\ Y\in\tau_0^\beta\}\cup\{X_0\cup Y\ |\ Y\in \tau_0^\beta\}$$
such that it is stronger, then $\tau_0^\beta$, $X_0\in \tau$ and for any $A\in \tau$
the preimages $(\varphi^\beta_\gamma)^{-1}(A)$ are open. Further we explain why it is open.

Consider three cases $A\in \tau_0^\beta$, $A=X_0\cap Y$, $A=X_0\cup Y$ ($Y\in \tau_0^\beta$).

1) Let $A\in \tau_0^\beta$, then evidently $(\varphi_\gamma^\beta)^{-1}(A)$ is open.

2) Let $A=X_0\cap Y$, then $$(\varphi_\gamma^\beta)^{-1}(X\cap \mathfrak{S}_0^\beta \cap Y)=(\varphi_\gamma^\beta)^{-1}\left(X\cap\bigcup\limits_{\gamma>0}\mathcal{S}_{0,\infty}^{\beta,\gamma}\right) \cap (\varphi_\gamma^\beta)^{-1}(Y)=$$
$$=\left(\bigcup_{\gamma>0}(\varphi_\gamma^\beta)^{-1}\left(X\cap \mathcal{S}_{0,\infty}^{\beta,\gamma}\right)\right)\cap(\varphi_\gamma^\beta)^{-1}(Y)=\left(\bigcup_{\gamma>0}\left(\varphi^\alpha_\gamma m_\gamma^{\beta,\alpha}\right)^{-1}\left(X\right)\right)\cap(\varphi_\gamma^\beta)^{-1}(Y).$$
is open, since $\left(\varphi^\alpha_\gamma m_\gamma^{\beta,\alpha}\right)^{-1}(X)$ is open and $\left(\varphi_\gamma^\beta\right)^{-1}(Y)$ is open.

3) Let $A=X_0\cup Y$, then $$(\varphi_\gamma^\beta)^{-1}\left((X\cap \mathfrak{S}_0^\beta) \cup Y\right)
=(\varphi_\gamma^\beta)^{-1}\left(X\cap\bigcup\limits_{\gamma>0}\mathcal{S}_{0,\infty}^{\beta,\gamma}\right) \cup (\varphi_\gamma^\beta)^{-1}(Y)=$$
$$=\left(\bigcup_{\gamma>0}(\varphi_\gamma^\beta)^{-1}\left(X\cap \mathcal{S}_{0,\infty}^{\beta,\gamma}\right)\right)\cup(\varphi_\gamma^\beta)^{-1}(Y)=\left(\bigcup_{\gamma>0}\left(\varphi^\alpha_\gamma m_\gamma^{\beta,\alpha}\right)^{-1}\left(X\right)\right)\cup(\varphi_\gamma^\beta)^{-1}(Y).$$
is open, since $\left(\varphi^\alpha_\gamma m_\gamma^{\beta,\alpha}\right)^{-1}(X)$ is open and $\left(\varphi_\gamma^\beta\right)^{-1}(Y)$ is open.

Thus, we get a contradiction with the maximality of $\tau_0^\beta$, therefore $X_0\in \tau_0^\beta.$\end{proof}

\begin{lemma}
Let $\alpha<\beta, \alpha,\beta \in \mathbb{R}^+$ and
$$\tau_\infty^\beta|_\alpha:=\{X\cap \mathfrak{S}_\infty^\alpha\ |\ X\in \tau_\infty^\beta\}$$
i.e. the topology induced by $\tau_\infty^\beta$ on $\mathfrak{S}_\infty^\alpha,$ then
$$\tau_\infty^\alpha\supset\tau_\infty^\beta|_\alpha.$$
\end{lemma}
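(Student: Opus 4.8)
The plan is to deduce the inclusion $\tau_\infty^\beta|_\alpha\subset\tau_\infty^\alpha$ from the continuity of the canonical embedding
$$\iota:(\mathfrak{S}_\infty^\alpha,\tau_\infty^\alpha)\hookrightarrow(\mathfrak{S}_\infty^\beta,\tau_\infty^\beta),$$
which is well defined and injective by part $(ii)$ of the preceding Lemma, since $\alpha<\beta$ gives $\mathfrak{S}_\infty^\alpha\subset\mathfrak{S}_\infty^\beta$. Indeed, once $\iota$ is known to be continuous, every $X\in\tau_\infty^\beta$ has open preimage $\iota^{-1}(X)=X\cap\mathfrak{S}_\infty^\alpha\in\tau_\infty^\alpha$; and since the sets $X\cap\mathfrak{S}_\infty^\alpha$ with $X\in\tau_\infty^\beta$ are precisely the members of $\tau_\infty^\beta|_\alpha$, this yields exactly $\tau_\infty^\beta|_\alpha\subset\tau_\infty^\alpha$. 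Thus the whole statement reduces to the continuity of $\iota$.

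To prove that $\iota$ is continuous I would invoke the universal property of the initial topology $\tau_\infty^\beta$, which by its definition (see \eqref{psi}) is the weakest topology on $\mathfrak{S}_\infty^\beta$ making all the projections $\psi_\gamma^\beta:\mathfrak{S}_\infty^\beta\to(\mathcal{S}_{0,\infty}^{\gamma,\beta},\|\cdot\|_{a_0^\gamma,a_\infty^\beta})$ continuous. Hence $\iota$ is continuous if and only if each composite $\psi_\gamma^\beta\circ\iota:\mathfrak{S}_\infty^\alpha\to(\mathcal{S}_{0,\infty}^{\gamma,\beta},\|\cdot\|_{a_0^\gamma,a_\infty^\beta})$ is continuous for every $\gamma>0$. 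The key observation is that this composite factors through the smaller-index space: since $\mathfrak{S}_\infty^\alpha=\bigcap_{\gamma>0}\mathcal{S}_{0,\infty}^{\gamma,\alpha}\subset\mathcal{S}_{0,\infty}^{\gamma,\alpha}$ and, by the inclusion scheme, $\mathcal{S}_{0,\infty}^{\gamma,\alpha}\subset\mathcal{S}_{0,\infty}^{\gamma,\beta}$ for $\alpha<\beta$, one has
$$\psi_\gamma^\beta\circ\iota=m_\gamma\circ\psi_\gamma^\alpha,$$
where $\psi_\gamma^\alpha:\mathfrak{S}_\infty^\alpha\to(\mathcal{S}_{0,\infty}^{\gamma,\alpha},\|\cdot\|_{a_0^\gamma,a_\infty^\alpha})$ is the projection defining $\tau_\infty^\alpha$ and $m_\gamma:(\mathcal{S}_{0,\infty}^{\gamma,\alpha},\|\cdot\|_{a_0^\gamma,a_\infty^\alpha})\hookrightarrow(\mathcal{S}_{0,\infty}^{\gamma,\beta},\|\cdot\|_{a_0^\gamma,a_\infty^\beta})$ is the norm embedding.

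The two factors are continuous for separate reasons. First, $\psi_\gamma^\alpha$ is continuous by the very definition of $\tau_\infty^\alpha$. Second, $m_\gamma$ is bounded: with the first index fixed at $\gamma$ and the second index increasing from $\alpha$ to $\beta$, the norm comparison recorded in the norm scheme gives $\|\cdot\|_{a_0^\gamma,a_\infty^\beta}\prec\|\cdot\|_{a_0^\gamma,a_\infty^\alpha}$, that is, there is a constant $C>0$ with $\|x\|_{a_0^\gamma,a_\infty^\beta}\le C\|x\|_{a_0^\gamma,a_\infty^\alpha}$ for all $x\in\mathcal{S}_{0,\infty}^{\gamma,\alpha}$; hence $m_\gamma$ is continuous. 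Therefore $\psi_\gamma^\beta\circ\iota=m_\gamma\circ\psi_\gamma^\alpha$ is continuous for every $\gamma$, so $\iota$ is continuous and the lemma follows.

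I expect the only point requiring care to be bookkeeping rather than a genuine obstacle: one must check that the two ways of regarding an $x\in\mathfrak{S}_\infty^\alpha$ as an element of $\mathcal{S}_{0,\infty}^{\gamma,\beta}$ — through $\mathfrak{S}_\infty^\beta$ and through $\mathcal{S}_{0,\infty}^{\gamma,\alpha}$ — coincide, so that the factorization $\psi_\gamma^\beta\circ\iota=m_\gamma\circ\psi_\gamma^\alpha$ is literally valid, and one must keep the direction of the relation $\prec$ straight. Unlike the preceding lemma, where $\tau_0^\beta$ was a final (inductive) topology and openness had to be forced through a maximality argument, here $\tau_\infty^\beta$ is an initial (projective) topology, so the universal property applies directly and no contradiction argument is needed.
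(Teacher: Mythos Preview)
Your proof is correct and rests on exactly the same key ingredient as the paper's: the norm comparison $\|\cdot\|_{a_0^\gamma,a_\infty^\beta}\prec\|\cdot\|_{a_0^\gamma,a_\infty^\alpha}$ for each fixed $\gamma$. The paper simply states that $\tau_\infty^\alpha$ and $\tau_\infty^\beta|_\alpha$ are generated by the seminorm families $\{\|\cdot\|_{a_0^\gamma,a_\infty^\alpha}\}_\gamma$ and $\{\|\cdot\|_{a_0^\gamma,a_\infty^\beta}\}_\gamma$ and invokes this comparison directly, whereas you unpack the same reasoning through the universal property of the initial topology; the content is identical.
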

\begin{proof}
The topology $\tau_\infty^\alpha$ is determined with the set of the seminorms $\{\|\cdot\|_{a_0^\gamma, a_\infty^\alpha}\}_{\gamma=1}^\infty$ and the topology $\tau_\infty^\beta|_\alpha$ is determined by the system $\{\|\cdot\|_{a_0^\gamma, a_\infty^\beta}\}_{\gamma=1}^\infty.$
It is sufficient to note, that  $\|\cdot\|_{a_0^\gamma, a_\infty^\beta}\prec \|\cdot\|_{a_0^\gamma, a_\infty^\alpha}.$
\end{proof}

For $\underline{\mathfrak{L}_{0,\infty}}$ it is natural to define the topology $\overline{\tau_{0,\infty}}$
which is determined as the strongest topology such that all of the embeddings
\begin{equation}
\Phi_\alpha: x\in (\mathfrak{S}_\infty^\alpha, \tau_\infty^\alpha) \mapsto x\in (\underline{\mathfrak{L}_{0,\infty}}, \overline{\tau_{0,\infty}})
\end{equation}
are continuous.

On the other side, it is natural to define the topology $\underline{\tau_{0,\infty}}$
which would be the weakest topology on $\overline{\mathfrak{L}_{0,\infty}}$ such that all of the embeddings
\begin{equation}
\Psi_\alpha: x\in (\overline{\mathfrak{L}_{0,\infty}},\underline{\tau_{0,\infty}})\mapsto x\in (\mathfrak{S}^\alpha_0, \tau_0^\alpha)
\end{equation}
are continuous.

\begin{theorem}
The embedding
\begin{equation}
    \Lambda: x\in (\underline{\mathfrak{L}_{0,\infty}}, \overline{\tau_{0,\infty}}) \mapsto x\in (\overline{\mathfrak{L}_{0,\infty}}, \underline{\tau_{0,\infty}})
\end{equation}
is continuous.
\end{theorem}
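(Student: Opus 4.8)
The plan is to exploit the two universal properties in play: $\underline{\tau_{0,\infty}}$ on the target $\overline{\mathfrak{L}_{0,\infty}}$ is an \emph{initial} (projective) topology, defined as the weakest one making every $\Psi_\alpha$ continuous, whereas $\overline{\tau_{0,\infty}}$ on the source $\underline{\mathfrak{L}_{0,\infty}}$ is a \emph{final} (inductive) topology, defined as the strongest one making every $\Phi_\beta$ continuous. Combining the two characterizations reduces the continuity of $\Lambda$ to the continuity of the individual ``diagonal'' inclusions $(\mathfrak{S}_\infty^\beta, \tau_\infty^\beta) \hookrightarrow (\mathfrak{S}_0^\alpha, \tau_0^\alpha)$, which I would then settle by factoring each one through the Banach space $\mathcal{S}_{0,\infty}^{\alpha,\beta}$.

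First I would invoke that $\underline{\tau_{0,\infty}}$ is the weakest topology making every $\Psi_\alpha$ continuous: by the standard characterization of initial topologies, $\Lambda$ is continuous if and only if $\Psi_\alpha \circ \Lambda$ is continuous for every $\alpha>0$. Next I would invoke that $\overline{\tau_{0,\infty}}$ is the strongest topology making every $\Phi_\beta$ continuous: by the standard characterization of final topologies, the map $\Psi_\alpha \circ \Lambda$ out of $(\underline{\mathfrak{L}_{0,\infty}}, \overline{\tau_{0,\infty}})$ is continuous if and only if $\Psi_\alpha \circ \Lambda \circ \Phi_\beta$ is continuous for every $\beta>0$. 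Since each of $\Phi_\beta$, $\Lambda$, $\Psi_\alpha$ acts as the identity on underlying elements, the composite $\Psi_\alpha \circ \Lambda \circ \Phi_\beta$ is precisely the set-theoretic inclusion $\iota_{\beta,\alpha}\colon x\in \mathfrak{S}_\infty^\beta \mapsto x\in \mathfrak{S}_0^\alpha$.

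It then remains to check that $\iota_{\beta,\alpha}$ is continuous for the topologies $\tau_\infty^\beta$ and $\tau_0^\alpha$. Here I would use the chain of inclusions
$$\mathfrak{S}_\infty^\beta = \bigcap_{\gamma>0}\mathcal{S}_{0,\infty}^{\gamma,\beta} \subset \mathcal{S}_{0,\infty}^{\alpha,\beta} \subset \bigcup_{\gamma>0}\mathcal{S}_{0,\infty}^{\alpha,\gamma} = \mathfrak{S}_0^\alpha,$$
valid for every $\alpha,\beta>0$, which lets me write $\iota_{\beta,\alpha} = \varphi_\beta^\alpha \circ \psi_\alpha^\beta$, where $\psi_\alpha^\beta\colon (\mathfrak{S}_\infty^\beta,\tau_\infty^\beta) \to (\mathcal{S}_{0,\infty}^{\alpha,\beta}, \|\cdot\|_{a_0^\alpha,a_\infty^\beta})$ is one of the maps \eqref{psi} defining $\tau_\infty^\beta$ (hence continuous, being part of the weakest such topology), and $\varphi_\beta^\alpha\colon (\mathcal{S}_{0,\infty}^{\alpha,\beta}, \|\cdot\|_{a_0^\alpha,a_\infty^\beta}) \to (\mathfrak{S}_0^\alpha,\tau_0^\alpha)$ is one of the maps \eqref{phi} defining $\tau_0^\alpha$ (hence continuous, being part of the strongest such topology). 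As a composition of two continuous maps $\iota_{\beta,\alpha}$ is continuous, and unwinding the two reductions yields the continuity of $\Lambda$.

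The argument is essentially formal, so I expect no analytic obstacle; the care lies entirely in the index bookkeeping. The one point that must be verified explicitly is that the \emph{same} intermediate Banach space $\mathcal{S}_{0,\infty}^{\alpha,\beta}$ simultaneously receives $\mathfrak{S}_\infty^\beta$ (as the $\gamma=\alpha$ term of the defining intersection) and maps into $\mathfrak{S}_0^\alpha$ (as the $\gamma=\beta$ term of the defining union); this is exactly what the displayed chain of inclusions guarantees. A secondary point worth stating is that the reductions via the universal properties remain valid without any Hausdorff or coincidence assumptions on the topologies, since the characterizations of initial and final topologies hold for arbitrary topological spaces.
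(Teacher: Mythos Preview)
Your proof is correct and takes a genuinely different route from the paper's.

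The paper argues by restricting both topologies to each building block $\mathfrak{S}_\infty^{\beta_0}$ and establishing the inclusion $\overline{\tau_{0,\infty}}|_{\beta_0}\supset\underline{\tau_{0,\infty}}|_{\beta_0}$ directly at the level of open sets: it writes each restricted topology as an iterated union/intersection of the restricted norm topologies $\tau_{0,\infty}^{\alpha,\beta}|_{\beta_0}$ and then invokes the general set-theoretic containment $\bigcup_\alpha\bigcap_\beta\subset\bigcap_\beta\bigcup_\alpha$. Your argument, by contrast, never manipulates families of open sets. You use the universal properties of the initial topology on the target and the final topology on the source to reduce continuity of $\Lambda$ to the continuity of each composite $\Psi_\alpha\circ\Lambda\circ\Phi_\beta:\mathfrak{S}_\infty^\beta\to\mathfrak{S}_0^\alpha$, and then finish by factoring this inclusion through the single Banach space $\mathcal{S}_{0,\infty}^{\alpha,\beta}$ via the defining maps $\psi_\alpha^\beta$ and $\varphi_\beta^\alpha$.

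Your approach is more transparent and sidesteps a delicate point the paper glosses over: identities such as $\overline{\tau_{0,\infty}}|_{\beta_0}=\bigcap_{\beta\ge\beta_0}\tau_\infty^\beta|_{\beta_0}$ require justification, since restricting a final (inductive) topology to a subspace does not in general yield the final topology of the restricted system. The paper's method has the virtue of making the $\bigcup\bigcap\subset\bigcap\bigcup$ heuristic visible, which aligns with the surrounding Proposition~2, but your categorical reduction is both shorter and more robust.
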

\begin{proof}

Note, that $$\forall \beta_0>0\  \overline{\tau_{0,\infty}}|_{\beta_0}=\bigcap_{\beta\geq\beta_0}\tau_{\infty}^{\beta}|_{\beta_0}\text{ and } \underline{\tau_{0,\infty}}=\bigcup_{\alpha>0}\tau_0^\alpha|_\infty.$$
At the same time, 
$$\tau_\infty^\beta=\bigcup_{\alpha>0}\tau_{0,\infty}^{\alpha,\beta}|_{\infty} \text{ and }
\forall \beta_0>0 \ \tau_0^\alpha|_{\beta_0}=\bigcap_{\beta\geq\beta_0}\tau_{0,\infty}^{\alpha,\beta}|_{\beta_0}.$$

It is sufficient to prove that for any $\beta_0>0$ we have the inclusion
$$\overline{\tau_{0,\infty}}|_{\beta_0}\supset \underline{\tau_{0,\infty}}|_{\beta_0},\text{ where } \tau|_{\beta_0}=\{X\cap \mathfrak{S}_\infty^{\beta_0}\ |\ X\in \tau\}.$$
Thus,
 $$\overline{\tau_{0,\infty}}|_{\beta_0}=\bigcap_{\beta\geq\beta_0}\left(\bigcup_{\alpha>0}\tau_{0,\infty}^{\alpha,\beta}|_\infty\right)|_{\beta_0}$$ 
 and $$\underline{\tau_{0,\infty}}|_{\beta_0}=\left(\bigcup_{\alpha>0}\tau_0^\alpha|_{\infty}\right)|_{\beta_0}.$$
Note, that the reductions always lead to the space $\mathfrak{S}_\infty^{\beta_0}$ and may be rewrited as 
$$\overline{\tau_{0,\infty}}|_{\beta_0}=\bigcap_{\beta\geq\beta_0}\left(\bigcup_{\alpha>0}\left(\tau_{0,\infty}^{\alpha,\beta}|_{\beta_0}\right)\right) \text{ and } \underline{\tau_{0,\infty}}|_{\beta_0}=\bigcup_{\alpha>0}\left(\tau_0^\alpha|_{\beta_0}\right).$$
But then $\tau_0^\alpha|_{\beta_0}=\bigcap_{\beta\geq\beta_0}\tau_{0,\infty}^{\alpha,\beta}|_{\beta_0},$ therefore
$$\overline{\tau_{0,\infty}}|_{\beta_0}=\bigcap_{\beta\geq\beta_0}\left(\bigcup_{\alpha>0}\left(\tau_{0,\infty}^{\alpha,\beta}|_{\beta_0}\right)\right) \supset \bigcup_{\alpha>0}\left(\bigcap_{\beta\geq\beta_0}\left(\tau_{0,\infty}^{\alpha,\beta}|_{\beta_0}\right)\right)=\underline{\tau_{0,\infty}}|_{\beta_0}.$$
\end{proof}

{\bf Remark}
Note, that $\overline{\tau_{0,\infty}}$ is a topology of $(LF)$-space and $\underline{\tau_{0,\infty}}$ is a topology of the projective limit of $(LB)$-spaces.

{\bf Remark}
All the constructions of this section may be applied to the system $\mathcal{S}_{\infty, 0}^{\alpha,\beta}$. We will distinguish such constructions by the notation $\overline{\mathfrak{L}_{\infty,0}}$, $\underline{\mathfrak{L}_{\infty,0}}$, $\mathfrak{L}_{\infty,0}$,
$\tau_{\infty, 0}^{\alpha,\beta}$ and so on.

\subsection{Bring it all together}
Evidently we may consider different limits of $L_\infty(a^\alpha)$ spaces, using the constructions above, particularly, we may define

\begin{definition}
Let $a$ be a positive operator affiliated with von Neumann algebra $\mathcal{M}$ acting on the Hilbert space $H$, such that not $a$ nor $a^{-1}$ are necessarily bounded. Then we define the lower limit of the spaces $\underline{\mathrm{lim}}L_\infty(a^\alpha)$ as a vector space of sesquilinear forms formally wriiten as
$$\begin{pmatrix}
\left(\mathfrak{L}_\infty(a_0), \tau_\infty(a_0)\right) && (\underline{\mathfrak{L}_{0,\infty}}, \overline{\tau_{0,\infty}}) \\
(\underline{\mathfrak{L}_{\infty, 0}}, \overline{\tau_{\infty,0}}) && \left(\mathfrak{L}_\infty(a_\infty),\tau_\infty(a_\infty)\right)\\
\end{pmatrix}$$
\end{definition}

\begin{definition}
Let $a$ be a positive operator affiliated with von Neumann algebra $\mathcal{M}$ acting on the Hilbert space $H$, such that not $a$ nor $a^{-1}$ are necessarily bounded. Then we define the upper limit of the spaces $\overline{\mathrm{lim}}L_\infty(a^\alpha)$ as a vector space of sesquilinear forms formally wriiten as
$$\begin{pmatrix}
(\mathfrak{L}_\infty(a_0), \tau_\infty(a_0)) && (\overline{\mathfrak{L}_{0,\infty}}, \underline{\tau_{0,\infty}}) \\
(\overline{\mathfrak{L}_{\infty,0}}, \underline{\tau_{\infty,0}}) && (\mathfrak{L}_\infty(a_\infty),\tau_\infty(a_\infty))\\
\end{pmatrix}$$
\end{definition}
\begin{theorem}
Let $\underline{\mathfrak{L}_{0,\infty}}=\overline{\mathfrak{L}_{0,\infty}}$, $\underline{\tau_{0,\infty}}=\overline{\tau_{0,\infty}},$ then $$\mathfrak{L}_\infty(a);=\overline{\mathrm{lim}}L_\infty(a^\alpha)=\underline{\mathrm{lim}}L_\infty(a^\alpha)$$
is $(LF)$-space.
\end{theorem}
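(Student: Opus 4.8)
The plan is to split the statement into two independent tasks: showing that the upper and lower limits coincide as topological vector spaces, and then showing that the common object is an $(LF)$-space by recognising it as a finite product of $(LF)$-spaces.

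First I would check that the two limits agree. Comparing the two matrix definitions, the diagonal entries $(\mathfrak{L}_\infty(a_0),\tau_\infty(a_0))$ and $(\mathfrak{L}_\infty(a_\infty),\tau_\infty(a_\infty))$ are literally identical in both; only the off-diagonal entries differ. By hypothesis $\underline{\mathfrak{L}_{0,\infty}}=\overline{\mathfrak{L}_{0,\infty}}$ and $\underline{\tau_{0,\infty}}=\overline{\tau_{0,\infty}}$, so the $(0,\infty)$-block agrees in both matrices. Applying the isomorphism $\widehat{axb}\mapsto\overline{\widehat{bxa}}$ between $\mathcal{S}_{a,b}(\mathcal{M})$ and $\mathcal{S}_{b,a}(\mathcal{M})$ transports this equality to the $(\infty,0)$-block, giving $\underline{\mathfrak{L}_{\infty,0}}=\overline{\mathfrak{L}_{\infty,0}}$ and $\underline{\tau_{\infty,0}}=\overline{\tau_{\infty,0}}$. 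Hence $\overline{\mathrm{lim}}L_\infty(a^\alpha)=\underline{\mathrm{lim}}L_\infty(a^\alpha)$, and we may denote the common space $\mathfrak{L}_\infty(a)$, carried as a topological vector space by the product topology of its four block topologies.

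Next I would identify each block as an $(LF)$-space. The block $(\mathfrak{L}_\infty(a_0),\tau_\infty(a_0))$ is a Frechet space by Theorem 1 (here $a_0$ is bounded on $p_0H$), and every Frechet space is trivially an $(LF)$-space. The block $(\mathfrak{L}_\infty(a_\infty),\tau_\infty(a_\infty))$ is an $(LB)$-space by construction ($a_\infty$ is unbounded with bounded inverse on $p_\infty H$), and an $(LB)$-space is an $(LF)$-space since Banach spaces are Frechet. For the off-diagonal blocks, recall that $\underline{\mathfrak{L}_{0,\infty}}=\bigcup_{\beta>0}\mathfrak{S}_\infty^\beta$, that each $(\mathfrak{S}_\infty^\beta,\tau_\infty^\beta)$ is a Frechet space (its topology is generated by the countable family $\{\|\cdot\|_{a_0^n,a_\infty^\beta}\}_{n}$), and that the family $\mathfrak{S}_\infty^\beta$ is increasing in $\beta$. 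Thus $(\underline{\mathfrak{L}_{0,\infty}},\overline{\tau_{0,\infty}})$ is exactly the inductive limit of an increasing sequence of Frechet spaces, i.e. an $(LF)$-space; the same holds for the $(\infty,0)$-block by symmetry. All four blocks are therefore $(LF)$-spaces. Each is moreover a countable inductive limit of an increasing sequence of Frechet steps: the constant sequence $F_0:=\mathfrak{L}_\infty(a_0)$ for the first diagonal block, the Banach spaces $L_\infty(a_\infty^n)$ for the second, and the Frechet spaces $\mathfrak{S}_\infty^n$ (in the $(0,\infty)$ and $(\infty,0)$ variants) for the two off-diagonal blocks, using that $\mathbb{N}$ is cofinal in $(0,+\infty)$ so that restricting to integer indices changes neither the union nor its inductive-limit topology. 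Putting $$G_n:=F_0\times(\mathfrak{S}_\infty^n)_{0,\infty}\times(\mathfrak{S}_\infty^n)_{\infty,0}\times L_\infty(a_\infty^n),$$ each $G_n$ is a finite product of Frechet spaces, hence Frechet, with $G_n\subset G_{n+1}$ and $\bigcup_n G_n=\mathfrak{L}_\infty(a)$ as a vector space.

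The main obstacle is the last step: verifying that $\mathfrak{L}_\infty(a)$ with its product topology is exactly the inductive limit of the sequence $(G_n)$, equivalently that a finite product of $(LF)$-spaces is again an $(LF)$-space. One inclusion is routine: the product topology makes each inclusion $G_n\hookrightarrow\mathfrak{L}_\infty(a)$ continuous (project to a factor, then include), so the inductive-limit topology is at least as fine as the product topology. The reverse inclusion is the delicate point and is not a formal consequence of the universal properties, since inductive limits need not commute with products; it requires a diagonal, or staircase, construction of a product neighbourhood inside a given absolutely convex inductive-limit neighbourhood, exploiting the metrizability of the Frechet steps and the fact that only finitely many factors occur. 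I would invoke the standard result on finite products of $(LF)$-spaces from \cite{Kothe} rather than reproduce this argument, and conclude that $\mathfrak{L}_\infty(a)$ is an $(LF)$-space.
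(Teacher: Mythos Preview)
The paper states this theorem without proof: after Definitions~7 and~8 it simply records the result and passes to the corollary, relying implicitly on the preceding remarks (Remark~3 says outright that $\overline{\tau_{0,\infty}}$ is the topology of an $(LF)$-space, Remark~1 that $\mathfrak{S}_0^\alpha$ is $(LB)$, Remark~2 that $\mathfrak{S}_\infty^\beta$ is Fr\'echet, and Theorem~1/Definition~3 handle the diagonal blocks). Your proposal is a correct and considerably more careful fleshing-out of exactly this implicit argument: block-by-block coincidence of the two limits under the hypothesis, identification of each block as an $(LF)$-space, and assembly of the $2\times 2$ matrix as a finite product of $(LF)$-spaces.

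What you add beyond the paper is the last step, which the paper leaves entirely unspoken: that the product topology on the four blocks is again an $(LF)$ topology. You are right that this is the only nontrivial point and that it does not follow formally from universal properties; your diagonal $G_n$ construction and the appeal to the standard result in K\"othe are the correct way to close it. One small caveat: the paper never explicitly says that the topology on the matrix space $\mathfrak{L}_\infty(a)$ is the product of the four block topologies, so you are supplying an interpretation as well as a proof---but it is the only natural one, and your argument goes through with it.
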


\begin{corollary}
If $a\geq 0$ is affiliated with the center $\mathfrak{C}(\mathcal{M})$ of the von Neumann algebra $\mathcal{M}$, then the limits space $\mathfrak{L}_\infty(a)=\lim\limits_{\alpha} L_\infty(a^\alpha)$ is an (LF)-space.
\end{corollary}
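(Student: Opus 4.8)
The plan is to derive the statement from the immediately preceding theorem by checking that its hypotheses hold automatically in the central case. The decisive point is that, for $a$ affiliated with $\mathfrak{C}(\mathcal{M})$, the whole off-diagonal part of the matrix decomposition collapses, so that the subtle comparison between $\overline{\mathfrak{L}_{0,\infty}}$ and $\underline{\mathfrak{L}_{0,\infty}}$ (and between the corresponding topologies, and likewise for the $\infty,0$ corner) is rendered trivial. Concretely, if $a\eta\mathfrak{C}(\mathcal{M})$ then the spectral measure $P_\lambda$ of $a$ takes values in $\mathfrak{C}(\mathcal{M})$, so the projections $p_0=\int_0^1 dP_\lambda$ and $p_\infty=\int_1^{+\infty}dP_\lambda$ are central; consequently $p_0 x p_\infty = x\,p_0 p_\infty=\mathbf{0}$ and $p_\infty x p_0=\mathbf{0}$ for every $x\in\mathcal{M}$, i.e. the corners $p_0\mathcal{M}p_\infty$ and $p_\infty\mathcal{M}p_0$ vanish.

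Since $\widehat{a_0^{\alpha/2}xa_\infty^{\beta/2}}(f,g)=\langle x a_\infty^{\beta/2}f, a_0^{\alpha/2}g\rangle$ with $a_\infty^{\beta/2}f\in p_\infty H$ and $a_0^{\alpha/2}g\in p_0 H$, each such form equals $\langle p_0 x p_\infty(\cdot),\cdot\rangle=0$, whence $\mathcal{S}_{0,\infty}^{\alpha,\beta}=\{0\}$ and, symmetrically, $\mathcal{S}_{\infty,0}^{\alpha,\beta}=\{0\}$ for all $\alpha,\beta>0$. Therefore all the off-diagonal limit spaces are trivial, $\overline{\mathfrak{L}_{0,\infty}}=\underline{\mathfrak{L}_{0,\infty}}=\{0\}$ and $\overline{\mathfrak{L}_{\infty,0}}=\underline{\mathfrak{L}_{\infty,0}}=\{0\}$, and on the zero space the topologies $\overline{\tau_{0,\infty}}$ and $\underline{\tau_{0,\infty}}$ (respectively $\overline{\tau_{\infty,0}}$ and $\underline{\tau_{\infty,0}}$) coincide. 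Thus the hypotheses of the preceding theorem hold, and $\mathfrak{L}_\infty(a)=\overline{\mathrm{lim}}L_\infty(a^\alpha)=\underline{\mathrm{lim}}L_\infty(a^\alpha)$ reduces to the diagonal
$$\mathfrak{L}_\infty(a)=\begin{pmatrix}(\mathfrak{L}_\infty(a_0),\tau_\infty(a_0)) & 0 \\ 0 & (\mathfrak{L}_\infty(a_\infty),\tau_\infty(a_\infty))\end{pmatrix},$$
that is, as a topological vector space it is the product $\mathfrak{L}_\infty(a_0)\times\mathfrak{L}_\infty(a_\infty)$.

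It remains to identify the diagonal factors and assemble them. Working in the reductions $\mathcal{M}_{p_0}$ and $\mathcal{M}_{p_\infty}$ as in this section, $a_0=\int_0^1\lambda\,dP_\lambda$ restricts to a bounded injective operator, so by Theorem 1 the factor $\mathfrak{L}_\infty(a_0)$ is a Fréchet space; while $a_\infty=\int_1^{+\infty}\lambda\,dP_\lambda$ restricts to an operator with bounded inverse, so by the construction of the second subsection $\mathfrak{L}_\infty(a_\infty)$ is an $(LB)$-space, say $\mathfrak{L}_\infty(a_\infty)=\varinjlim_n B_n$ with each $B_n$ a Banach space. I would then write
$$\mathfrak{L}_\infty(a_0)\times\mathfrak{L}_\infty(a_\infty)=\varinjlim_n\bigl(\mathfrak{L}_\infty(a_0)\times B_n\bigr),$$
and note that each factor $\mathfrak{L}_\infty(a_0)\times B_n$, being a finite product of a Fréchet and a Banach space, is Fréchet; hence $\mathfrak{L}_\infty(a)$ is a countable inductive limit of Fréchet spaces, i.e. an $(LF)$-space.

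The conceptual work is all in the first step: once centrality annihilates the off-diagonal corners, the upper/lower coincidence that is genuinely delicate in the general case is forced, and the remaining content is the bookkeeping of known facts. The one place that still needs care is the last display, namely verifying that the finite product commutes with the countable inductive limit so that the product topology on $\mathfrak{L}_\infty(a_0)\times\mathfrak{L}_\infty(a_\infty)$ really is the $(LF)$-topology of $\varinjlim_n(\mathfrak{L}_\infty(a_0)\times B_n)$. Finally, one should also dispose of the degenerate cases in which $a$ or $a^{-1}$ is bounded: there one diagonal factor is absent and the conclusion collapses to Theorem 1 or to the $(LB)$ statement of the second subsection, and an $(LB)$-space is in particular an $(LF)$-space.
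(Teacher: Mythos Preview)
Your argument is correct and matches the paper's intended derivation: the corollary is stated immediately after Theorem~3 without proof, and your observation that centrality of $a$ makes $p_0,p_\infty$ central, hence $p_0\mathcal{M}p_\infty=p_\infty\mathcal{M}p_0=\{\mathbf 0\}$ and all $\mathcal{S}_{0,\infty}^{\alpha,\beta}$, $\mathcal{S}_{\infty,0}^{\alpha,\beta}$ collapse to $\{0\}$, is exactly how the hypotheses $\underline{\mathfrak{L}_{0,\infty}}=\overline{\mathfrak{L}_{0,\infty}}$ and $\underline{\tau_{0,\infty}}=\overline{\tau_{0,\infty}}$ are forced. Your final paragraph re-deriving the $(LF)$ structure of the diagonal product is more than is strictly needed once Theorem~3 is invoked, but it is correct (the commutation $F\times\varinjlim_n B_n=\varinjlim_n(F\times B_n)$ you flag holds because finite products coincide with finite coproducts in locally convex spaces and colimits commute with coproducts).
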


\section{$L_p$-spaces}

Let $\mathcal{M}$ be a von Neumann algebra and $a\in \mathcal{M}$ be a positive bounded injective linear operator. We consider $L_1(a)$ and $L_\infty(a),$
descirbed previously (also see \cite{Nov2017}).

It is easy to see that for any $\alpha <\beta, \alpha, \beta\in \mathbb{R}$

$$L_1^h(a^\alpha)\hookrightarrow L_1^h(a^\beta);\ L_\infty^h(a^\alpha)\hookleftarrow L_\infty^h(a^\beta).$$

We also obtain, that $L_1^h(a)$ is isometrically isomorphic to $L_1^h(a/\|a\|)$.

Note, that for any $a$ such that $\|a\|\leq 1$ and any pair $\alpha<\beta$ the inequalities
$$K(x,t;L_1(a^\beta),L_\infty(a^\alpha))<K(x,t;L_1(a^\alpha),L_\infty(a^\alpha))$$
$$K(x,t;L_1(a^\alpha),L_\infty(a^\alpha))<K(x,t;L_1(a^\alpha),L_\infty(a^\beta))$$
hold.

Thus, we for any bounded $a$ the inclusion
$$
L_1(a^\alpha)+L_\infty(a^\alpha)  \hookrightarrow 
L_1(a^\beta)+L_\infty(a^\alpha) $$
$$L_1(a^\alpha)+L_\infty(a^\beta) \hookrightarrow 
L_1(a^\beta)+L_\infty(a^\beta) $$
hold.

\begin{theorem}
Let $a$ be positive injective bounded operator from the von Neumann algebra $\mathcal{M}$ and $\|a\|\leq 1$. 
$$L_p(a^\alpha) \hookrightarrow K_{(p-1)/p, p}(L_1^h(a^\beta), L_\infty^h(a^\alpha)) $$
$$K_{(p-1)/p, p}(L_1^h(a^\alpha), L_\infty(a^\beta)) \hookrightarrow L_p(a^\beta) $$

with $L_p(a^\alpha)$ being the interpolation space
$K_{(p-1)/p,p}(L_1^h(a^\alpha),L_\infty^h(a^\alpha))$.
\end{theorem}
\begin{proof}
The inclusions are proved analogously. Consider as the example the embedding
$K_{(p-1)/p, p}(L_1^h(a^\alpha, a^\beta))$ в $L_p(a^\alpha).$
First of all note, that
$$L_1(a^\alpha)+L_\infty(a^\beta)\subset L_1(a^\alpha)+L_\infty(a^\alpha).$$
After that note, that
$$\left(\int_0^\infty \left(t^{-\theta}K(x,t;L_1(a^\alpha), L_\infty(a^\alpha))\right)^q\frac{dt}{t}\right)^{1/q}\leq$$
$$\leq \left(\int_0^\infty \left(t^{-\theta}K(x,t;L_1(a^\alpha), L_\infty(a^\beta))\right)^q\frac{dt}{t}\right)^{1/q}<\infty,$$
thus if $x\in K_{(p-1)/p, p}(L_1^h(a^\alpha, a^\beta))$, then $x\in L_p(a^\alpha).$ At the same time the inequality also implies the continuity.
\end{proof}

From the latter Theorem we obtain that we, also, can build the following construction for the fixed $p\in(0,1)$.

Let $\mathfrak{L}_p(a^\alpha)$ be the inductive limit (by $\beta\to \infty$) of $K_{(p-1)/p,p}(L_1^h(a^\beta),L_\infty^h(a^\alpha))$ and $\mathfrak{R}_p(a^\alpha)$ be the projective limit (by $\beta\to\infty$) of $K_{(p-1)/p,p}(L_1^h(a^\alpha),L_\infty^h(a^\beta))$.
The projective limit of $\mathfrak{L}_p(a^\alpha)$
will be the upper limit $\overline{\lim}L_p(a^\alpha)$ and the inductive limit of $\mathfrak{R}_p(a^\alpha)$ will be the lower limit $\underline{{\lim}}L_p(a^\alpha)$. If they coincide tas the topological spaces, then $L_p(a^\alpha)$ has the mixed limit for $\alpha$ converging to infinity.

\section*{Acknowledgments}
This work was supported by Russian Foundation for Basic Research Grant 18-31-00218.


\begin{thebibliography}{99} 

\bibitem{BS88} Bennett, C.  Interpolation of operators / C. Bennett, R. Sharpley // Pure and Applied Mathematics, 129, Academic Press, Inc., Boston, MA, pp. xiv+469
\bibitem{BikShe2018} Bikchentaev A.M., Sherstnev A.N.  Studies on Noncommutative Measure Theory in Kazan University (1968--2018) / A.M. Bikchentaev, A.N. Sherstnev // International Journal of Theoretical Physics -- May 2019 -- DOI: 10.1007/s10773-019-04156-x
\bibitem{Esterle1997} Esterle, J. Countable inductive limits of Frechet algebras / J. Esterle // Journal d’Analyse Mathématique -- 1997 -- V. 71, I.1 -- pp. 195--204
\bibitem{FalTak1} Falcone, T. The non-commutative flow of weights on a von Neumann algebra / A.J. Falcone, M. Takesaki // J. Funct. Anal. -- 2001 -- V.182 -- №1 -- p. 170--206
\bibitem{FalTak2} Falcone, T. Operator valued weights without structure theory / A.J. Falcone,
M. Takesaki // Trans. Amer. Math. Soc. -- 1999 -- V.351 -- №1 -- p. 323--341
\bibitem{FalTak3} Falcone, T. $L_2$-von Neumann modules, their relative tensor products and the spatial derivative / A.J. Falcone // Illinois J. Math. -- 2000 -- V.44 -- №2 -- p.
407-437
\bibitem{Lafuente1987} Garcia-Lafuente, J. M. On the completion of (LF)-spaces / J. M. Garcia-Lafuente // 
Monatshefte f{\"u}r Mathematik -- 1987 -- V.103 -- I.2 -- p. 115--120
\bibitem{Kunzinger1993} Kunzinger, M. Barrelledness, Baire-like-and (LF)-spaces / - 1993 - Longman Scientific and Technical
\bibitem{Haagerup} Haagerup, U. $L_p$-spaces associated with an arbitrary von Neumann algebra / U. Haagerup // Colloques Int. CNRS -- 1979 -- №274 -- p. 175--184
\bibitem{Kothe} Kothe, G. {\it Topological Vector Spaces I} -- NY:Springer -- 1969.
\bibitem{NeuMur1} Murray, F.J. On rings of operators / F.J. Murray, J. von Neumann // Ann. Math. -- 1936 -- V.37 -- №1 -- p. 116--229
\bibitem{NeuMur2} Murray, F.J. On rings of operators II / F.J. Murray, J. von Neumann // Trans.
Amer. Math. Soc. -- 1937 -- V.41 -- No2 -- p. 208--248
\bibitem{NeuMur4} von Neumann, J. On rings of operators III / J. von Neumann // Ann. Math.– 1940 – V.41 – No1 – p. 94–161
\bibitem{NovEsk2017} Novikov A.A., Eskandarian Z., Inductive and projective limits of Banach spaces of measurable functions with order unities with respect to power parameter / A.A. Novikov, Z. Eskandarian // Russian Mathematics -- 2016 -- V. 60 -- I. 10 -- p. 67--71
\bibitem{Nov2019} Novikov, A. $C^*$-algebra Positive Element Invertibility Criteria in Terms of $L_1$-norms Equivalence and $L_\infty$-norms Equivalence // Lobachevskii Journal of Mathematics -- 2019 -- Vol. 40 -- No. 5 -- pp. 549--552
\bibitem{Nov2017} Novikov A., $L_1$-space for a positive operator affiliated with von Neumann algebra / A. Novikov //Positivity. - 2017. - Vol 21., Is. 1. - P.359-375.
\bibitem{Segal} Segal, I.E. A non-commutative extension of abstract integration / I.E. Segal // Ann. Math. – 1953 – V.57 – №3 – 401–457
\bibitem{Terp} Terp, M. $L_p$-spaces associated with von Neumann algebras / M. Terp // Københavns Univ. Math. Inst. Rapp. № 3a+3b -- 1981 -- 100 p.
\bibitem{NeuMur3} von Neumann, J. On rings of operators III / J. von Neumann // Ann. Math. -- 1940 -- V.41 -- №1 -- p. 94--16
Math. -- 1943 -- V.44 -- №4 -- p. 716--808
\bibitem{Esk2018} Locally Convex Limit Spaces of Measurable Functions with Order Units and Its Duals // Lobachevskii Journal of Mathematics -- 2018. -- V.39,I.2 -- pp. 195--199
\bibitem{Nov15} Novikov, An.An. Characterization of central elements of operator algebras by inequalities / An.An. Novikov, O.E. Tikhonov / Lobachevskii Journal of Mathematics -- 2015 -- V.36 -- No2 -- p. 208--210
\bibitem{Nov16} Novikov, An.An., Tikhonov O.E. Measures on orthoideals and $L_1$--spaces associated with positive operators / Lobachevskii Journal of Mathematics -- 2016 -- V.37 - No4 -- p. 497--499

\end{thebibliography}
\end{document}